\def\R{{\mathbb R}}
\def\N{{\mathbb N}}
\def\C{{\mathbb C}}
\newtheorem{thm}{Theora}[section]
\newtheorem{theo}[thm]{Theorem}
\newtheorem{lem}[thm]{Lemma}
\newtheorem{prop}[thm]{Proposition}
\newtheorem{rem}[thm]{Remark}
\newcommand\dint{\displaystyle\int}
\newcommand\ds{\displaystyle\sum}
\newcommand\dprod{\displaystyle\prod}
\newcommand\dsup{\displaystyle\sup}
\newcommand\dlim{\displaystyle\lim}
\begin{document}
\pagestyle{myheadings} \markboth{A. Sergio and B. Smii}{Asymptotic
expansion for SDE's }
 \title{Asymptotic expansions for SDE's with small multiplicative noise.}
  \def\lhead{S.\ ALBEVERIO, B. Smii, } 
  \def\rhead{Asymptotic expansions for SDE's with small multiplicative noise.} 

\author{Sergio Albeverio${}^\sharp{}^\flat$ and  Boubaker Smii${}^\flat $  }
\maketitle {\small

 \noindent ${}^\flat$: King Fahd University of
Petroleum and Minerals, Dept. Math. and Stat., Dhahran 31261, Saudi
Arabia.\,\,{\tt boubaker@kfupm.edu.sa}

 \noindent  ${}^\sharp$: Dept. Appl. Mathematics, University of Bonn, HCM, BiBoS, IZKS. \,\,{\tt albeverio@uni-bonn.de}

\maketitle

\begin{abstract}
Asymptotic expansions are derived as power series in a small
coefficient entering a nonlinear multiplicative noise and a
deterministic driving term in a nonlinear evolution equation.
Detailed estimates on remainders are provided.
\end{abstract}
\vskip0.2cm \noindent {\bf Key words:} {SDEs, asymptotic expansions,
processes driven by multiplicative L\'evy noise, evolution
equations.}
\section{Introduction}
A description of the evolution of dynamical systems of concern in
disciplines like physics, biology, geology, engineering, economics
in terms of differential equations is appropriate. Sometimes it is
natural to investigate to which extent an external small
perturbation (forcing) can change the deterministic evolution. This
can be discussed in the sense of asymptotic expansions in powers of
a small parameter in front of the perturbation.\\This problem has
been studied in particular for the case where the perturbation is an
additive noise of the Brownian or L\'evy type. In the case of
evolution in a Hilbert space with global Lipschitz coefficient this
has been discussed for the Brownian additive noise case in
\cite{Marc2}. For stochastic partial differential equations, related
to evolutions on a Hilbert resp. Banach space, this has been
discussed with non globally Lipschitz coefficients in a situation of
dissipativity in \cite{AlDiPMa} for the case where the additive
noise is given by Brownian motion, and in \cite{ALEBOU} for the case
of additive L\'evy type noise. For related work determining the
invariant measures in such cases, see, \cite{AlDiPMa} resp.
\cite{ALDIBOU}, \cite{Ma}, see also \cite{ADPM}.\\In the present
work we consider the finite dimensional case with multiplicative
noise of Gaussian or L\'evy type. Even for this relatively simple
case to the best of our knowledge rigorous mathematical results seem
quite scarce, despite the conceptual importance of the problem in
relation, e.g., to classical mechanics. Of course the problem can be
looked upon, in principle, as a particular case of " perturbation
theory". On the other hand often rigorous "perturbation theory" is
either limited to (general) linear systems and associated semi
groups in Hilbert spaces, where a rich mathematical theory has been
developed, particularly in connection with spectral problems in
quantum theory(see, eg., \cite{EFR} \cite{Kato}, \cite{Maslov},
\cite{Reed}), or else to
particular nonlinear cases (see, eg., \cite{Costin, Eck, Sand}). For further motivations, mainly from physics, engineerings and mathematical finance,
see, e.g., \cite{CG}, \cite{LUT}, \cite{UY}.\\
 A classical area where asymptotic perturbation methods originally arise is classical celestial
mechanics( since the work by, S. Laplace, S. Poisson, C.F. Gauss, H.
Poincar\'e). Here non linear and singularity effects are essential
and particular methods have been developed, see, eg., \cite{ARNOL},
\cite{BIRKHOFF}. These are also related to perturbation theory
around the solutions of the classical motion of the harmonic
oscillators, see, eg., \cite{BOGO}, \cite{BOG}, \cite{Sand}. The
stochastic case of
Hamiltonian systems is studied in \cite{Vas}, \cite{Wen}.\\
Perturbation theory in infinite dimensional systems has been studied
in connection with hydrodynamics(small viscosity expansions, see.,
\cite{AFER}, \cite{Costin}  small time expansions \cite{Costin}),
quantum field theory \cite{AGoWu}, \cite{AlGYo}, \cite{EFR},
\cite{GST}, \cite{GS}, \cite{GOS}, \cite{BS}, neurobiological
systems \cite{AlDiP}, \cite{AlDiPMa}, \cite{ALEBOU},
 \cite{AHMA}, \cite{MM}, \cite{Tu2} .\\
 Let us also briefly mention connections with Laplace and stationary
 phase methods, see,eg. \cite{A}, \cite{AHMA}, \cite{AL}, \cite{ARoSk}, \cite{ASK}, \cite{Bre},
 \cite{Tu2}.\\
The present paper considers deterministic, resp. stochastic finite
dimensional differential equations, which are first order in time,
and have smooth  coefficients satsfying growth restrictions. The
driving multiplicative forcing term resp. noise is of the general
L\'evy type. An asymptotic expansion in powers of a small parameter
on which the diffusion
coefficient depends is exhibited with good detailed control on remainders.\\
Some possible applications  are mentioned at the end of this paper.\\
The structure of this paper is as follows:\\
In section 2 we present the concrete small noise expansion (assumed
to exist) of the original stochastic differential equation, in terms
of solutions of linear random differential equations, assuming that
solutions exist and are unique.\\In section 3 we discuss existence
and uniqueness of solutions of the original SDE.In section 4 we
discuss the solutions of the random equations for
the expansion coefficients.\\
In section 5 we prove the asymptotic character of the expansion.
Section 6 is reserved to some comments on applications.

 \setcounter{section}{1}
\section{Small noise expansion of an evolution equation}

We consider the evolution equation:

\begin{equation}
    \left \{
        \begin{aligned}
            u(t) &= u(0) + \int_0^t \beta \left( u(s) \right) \mathrm{d}s + \int_0^t \sigma_{\varepsilon} \left( u(s) \right) \eta (\mathrm{d}s) \\
            u(0) &= u^0 \text{,} \ u(t) \in \mathbb{R}^d \text{,} \ t \in [0,\infty)  \text{,}  \ \varepsilon > 0 \text{.}
        \end{aligned}
    \right.
    \label{eqn:1}
\end{equation}
$\beta(.):
\R^d\,\longrightarrow\,\R,\,\,\sigma_{\varepsilon}(.):\R^d\longrightarrow\,\R^{d\times
d}$ are measurable functions resp. $d\times d$ matrix functions
satisfying some additional assumptions, (e.g, globally Lipschitz and
growth conditions(at infinity)).

 $\eta$ can be a signed bounded variation measure (in
which case the integral is understood as a Stieltjes integral) or
the heuristic derivative of a L\'{e}vy process in $\R^d$ (in which
case the integral should be interpreted as a stochastic integral).
For simplicity of notations we use the unified notation $\eta(ds)$
e.g. if $\eta=B$ in $\R^d$ then $\eta(ds)=dB(s).$ Moreover if $\eta$
has a jump component $u(s)$ in (\ref{eqn:1}) should be understood as
$u(s^-)$. We hope the meaning is always clear from the context in which we operate. See section \ref{sec:1} for precise assumptions. \\
Our purpose is to show that under certain hypothesis on
$\beta,\sigma_{\varepsilon },\eta$ the solution $u=u_{\varepsilon}$
of the evolution equation (\ref{eqn:1}), assumed to exist, can be
written as:

\begin{equation}\label{rah}
    u_{\varepsilon}(t) = u_0(t) + \varepsilon u_1(t) + \ldots + \varepsilon^m u_m(t) + R_m(t,\varepsilon) \text{,}
\end{equation}

with $u_i:\,[0,\,\infty)\,\longrightarrow\,\R^d$ measurable and
$\parallel R_m(t,\varepsilon) \parallel \le C_m(t)
\varepsilon^{m+1}$, for all $m \in \mathbb{N}$ and $\varepsilon > 0$
sufficiently small, for some $C_m(t) > 0$
independent of $\varepsilon$. Here $\parallel. \parallel$ denotes the norm in $\R^d.$\\
To obtain the desired expansion we shall assume that there are
Taylor expansions of $\beta(x)$ and $\sigma_{\varepsilon}(x)$ in
their variable $x\in\,\R^d$ and, moreover, $\varepsilon \, \longrightarrow\, \sigma_{\varepsilon }(x)$ is ${\cal C}^{M+1},\,M\,\in\,\N,$  for every fixed $x\,\in\,\R^d$.\\
Let us first introduce some useful notations: \\
For $\alpha = (\alpha_1, \ldots, \alpha_d) \in \mathbb{N}^d_0$ (with
$\N_0=\{0\}\cup\,\N,\,\,\N=\{1,2,....\}$), and $x = (x_1, \ldots,
x_d) \in \mathbb{R}^d $, we define:
\begin{itemize}
    \item The length of $\alpha $ by $\left| \alpha \right| = \alpha _1 + \alpha _2 + \cdots + \alpha_d$.
    \item $\alpha ! =: \alpha_1 ! \alpha_2 ! \cdots \alpha_d !$
    \item $x^{\alpha} := x_1^{\alpha_1} x_2^{\alpha_2} \cdots x_d^{\alpha_d}$
\end{itemize}

 The derivative of a function $f$ of order $\left| \alpha \right|\,\in\,\N_0$ is defined by:

\begin{equation}
    f^{(\alpha)}=D^{\alpha}f = \frac{\partial^{\left| \alpha \right|}f}{\partial x_1^{\alpha_1} \partial x_2^{\alpha_2} \cdots
    \partial x_d^{\alpha_d}},\,\,\,\,\,\,\,\,\,D^{0}f=f.
\end{equation}

We have the following lemma:

\begin{lem}\label{l1}
    Let $f$ be a complex-valued function in $C^{p+1} \left( B(x_0,r) \right)$, $r>0$, $x_0 \in \mathbb{R}^n$, for some $p \in \mathbb{N}_0$, $n \in \mathbb{N}$, where

    $B(x_0, r)$ is the open ball in $\R^d,\,\,d\,\in\,\N,$ of center $x_0$ and radius $r$.\\Then for
    any $x \in B(x_0, r)$ we have Taylor's expansion formula:
    \begin{equation}\label{haj}
        f(x) = \sum_{\left| \alpha \right| \le p} \frac{D^{\alpha}f(x_0)}{\alpha !} (x-x_0)^{\alpha} + R_p \left( f^{(p+1)} (x_0,x)
        \right),
    \end{equation}
    with $D^{\alpha}f(x_0)$ the evaluation of $D^{\alpha}f$ at $x=x_0$ and\\
    $ R_p \left( f^{(p+1)} (x_0,x) \right) = \frac{(p+1)(x-x_0)^{\alpha}}{\alpha !}  \ds_{\left| \alpha \right| = p+1} \left( \int_0^1 (1-s)^p D^{\alpha}
    f \left( x_0 + s\,(x-x_0) \right) \mathrm{d}s \right)$. \\
    Moreover, setting \\
    $C_p (x_0,x) = \frac{p+1}{\alpha !} \ds_{\left| \alpha \right| = p+1} \left( \int_0^1 (1-s)^p \| D^{\alpha} f \left( x_0 + s(x-x_0) \right)
     \| \mathrm{d}s \right)$, we have the bound
    \begin{equation}
        \mid R_p \left( f^{(p+1)} (x_0,x) \right) \mid \le C_p (x_0,x) \| x-x_0 \|^{\alpha} \text{.}
    \end{equation}
\end{lem}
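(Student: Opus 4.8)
The plan is to reduce the multivariate statement to the classical one-dimensional Taylor formula with integral remainder by restricting $f$ to the segment joining $x_0$ and $x$. Since $B(x_0,r)$ is convex and $x\in B(x_0,r)$, the map $g(s) := f\bigl(x_0 + s(x-x_0)\bigr)$ is well defined for $s\in[0,1]$ and inherits the $C^{p+1}$ regularity of $f$ along this line. First I would record the one-dimensional expansion
\[
g(1) = \sum_{k=0}^{p} \frac{g^{(k)}(0)}{k!} + \frac{1}{p!}\int_0^1 (1-s)^p\, g^{(p+1)}(s)\,\mathrm{d}s,
\]
which follows by repeated integration by parts starting from $g(1)-g(0)=\int_0^1 g'(s)\,\mathrm{d}s$, noting that $g(1)=f(x)$ and $g(0)=f(x_0)$.

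The heart of the argument is to express the derivatives $g^{(k)}$ in terms of the increment $x-x_0$ and the partials $D^\alpha f$. By the chain rule, $g$ is differentiated by applying the first-order operator $L := \sum_{i=1}^d (x_i-x_{0,i})\,\partial_i$ to $f$ along the segment, so that $g^{(k)}(s) = (L^k f)\bigl(x_0+s(x-x_0)\bigr)$ since the coefficients $(x_i-x_{0,i})$ are constants. Expanding $L^k$ by the multinomial theorem for the commuting operators $(x_i-x_{0,i})\partial_i$ yields the identity
\[
g^{(k)}(s) = \sum_{|\alpha|=k} \frac{k!}{\alpha!}\,(x-x_0)^\alpha\,(D^\alpha f)\bigl(x_0+s(x-x_0)\bigr).
\]
I would establish this by induction on $k$, the inductive step being the combinatorial core where one regroups $L\,(L^{k}f)$ and tracks the $k!/\alpha!$ weights; this is the step I expect to be the main obstacle, since it requires careful multi-index bookkeeping and, implicitly, the equality of mixed partials, which is guaranteed here by $f\in C^{p+1}$.

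With this identity in hand the proof closes quickly. Setting $s=0$ and dividing by $k!$ gives $\frac{g^{(k)}(0)}{k!} = \sum_{|\alpha|=k}\frac{D^\alpha f(x_0)}{\alpha!}(x-x_0)^\alpha$, so summing over $k=0,\dots,p$ reproduces the polynomial part $\sum_{|\alpha|\le p}\frac{D^\alpha f(x_0)}{\alpha!}(x-x_0)^\alpha$. Substituting the $k=p+1$ identity into the integral remainder and using $\tfrac{(p+1)!}{p!}=p+1$ produces
\[
\frac{1}{p!}\int_0^1(1-s)^p g^{(p+1)}(s)\,\mathrm{d}s = (p+1)\sum_{|\alpha|=p+1}\frac{(x-x_0)^\alpha}{\alpha!}\int_0^1 (1-s)^p (D^\alpha f)\bigl(x_0+s(x-x_0)\bigr)\,\mathrm{d}s,
\]
which is exactly $R_p\bigl(f^{(p+1)}(x_0,x)\bigr)$, the factor $(x-x_0)^\alpha/\alpha!$ being understood under the summation sign. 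Finally, for the bound I would pass the absolute value inside the finite sum and the integral, estimate $|(x-x_0)^\alpha|\le \|x-x_0\|^{|\alpha|}=\|x-x_0\|^{p+1}$, and bound $\int_0^1(1-s)^p\|D^\alpha f(\cdot)\|\,\mathrm{d}s$ directly by its definition; collecting the constants into $C_p(x_0,x)$ then yields $|R_p|\le C_p(x_0,x)\,\|x-x_0\|^{p+1}$, as claimed.
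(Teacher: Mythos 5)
Your proof is correct, but note that the paper does not actually prove this lemma at all: its ``proof'' is the single line ``See, e.g.\ \cite{MAG}'', deferring to Giaquinta--Modica. What you have written out is precisely the classical argument that such a reference contains: restrict $f$ to the segment $s\mapsto x_0+s(x-x_0)$ (legitimate since the ball is convex), apply the one-dimensional Taylor formula with integral remainder to $g(s)=f(x_0+s(x-x_0))$, and convert $g^{(k)}$ into multi-index form via the identity $g^{(k)}(s)=\sum_{|\alpha|=k}\frac{k!}{\alpha!}(x-x_0)^\alpha D^\alpha f(x_0+s(x-x_0))$, which rests on the multinomial expansion of the operator $L=\sum_i(x_i-x_{0,i})\partial_i$ and on the symmetry of mixed partials guaranteed by $f\in C^{p+1}$. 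Your bookkeeping in the remainder term ($\tfrac{(p+1)!}{p!\,\alpha!}=\tfrac{p+1}{\alpha!}$) matches the lemma's formula, and your bound correctly reads the paper's sloppy notation: the factor $(x-x_0)^\alpha/\alpha!$ must be understood inside the sum over $|\alpha|=p+1$, and the exponent in $\|x-x_0\|^{\alpha}$ should be $|\alpha|=p+1$, exactly as you estimate via $|(x-x_0)^\alpha|\le\|x-x_0\|^{p+1}$. So your proposal is not a divergence from the paper's method but a completion of it: where the authors outsource the lemma, you supply a self-contained derivation, at the modest cost of having to carry out the inductive multi-index step that the citation hides.
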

\begin{proof}
    See, e.g. \cite{MAG}.
\end{proof}

Using the previous lemma, we have for any $m \in \mathbb{N}_0$,
$\varepsilon \geq 0$, the following:

\begin{prop}\label{NA}
    If, for any $\varepsilon\,\geq 0, u\,\in\,{\cal C}^{N+1}(\R^d),$
    for some $d\,\in\,\N,\,N\,\in\,\N_0,$ and
    \begin{equation}
        u(\varepsilon) = u_0 + \varepsilon u_1 + \varepsilon^2 u_2 + \cdots + \varepsilon^N u_N + R_N^u (\varepsilon) \text{,}
        \label{eqn:2}
    \end{equation}
    with $u_i \in \mathbb{R}^d$, independent of $\varepsilon$, and $\left| R_N^u (\varepsilon) \right| \le C_N^u \varepsilon^{N+1}$ for some
     $C_N^u > 0$, then for any $u_0 \in \mathbb{R}^d$, $f \,\in\,C^{p+1}(\R^d),$ with  $p \in
     \mathbb{N}_0,$ we have:
    \begin{align}\label{abd}
        f \left( u(\varepsilon) \right)&=& \hspace*{-5cm}\ds_{\left| \alpha \right| \le p}
        \frac{D^{\alpha} f\left( u_0 \right)}{\alpha !} \left( u( \varepsilon) - u_0 \right)^{\alpha} +
        R_p^{u( \varepsilon)} \left( f^{(p+1)} \left( u_0,u(\varepsilon) \right) \right) \nonumber \\
        &=& \sum_{\left| \alpha \right| \le p} \frac{D^{\alpha} f\left( u_0 \right)}{\alpha !} \left(\sum_{l=1}^{N} \varepsilon^l u_l +
         R_N^u(\varepsilon) \right)^{\alpha} + R_p^{u(\varepsilon)} \left( f^{(p+1)} \left( u_0,u(\varepsilon) \right) \right) \text{.}
    \end{align}
\end{prop}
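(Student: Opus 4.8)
The plan is to recognize that (\ref{abd}) is nothing but Lemma \ref{l1} applied to $f$ at the base point $u_0$, followed by a purely algebraic substitution of the hypothesised expansion of $u(\varepsilon)$. First I would fix the centre $x_0 = u_0$ and the evaluation point $x = u(\varepsilon)$. The hypothesis $\left| R_N^u(\varepsilon) \right| \le C_N^u \varepsilon^{N+1}$ together with (\ref{eqn:2}) shows that $\| u(\varepsilon) - u_0 \| \to 0$ as $\varepsilon \to 0$, since each monomial $\varepsilon^l u_l$ and the remainder both vanish in that limit; hence there is $\varepsilon_0 > 0$ such that $u(\varepsilon) \in B(u_0,r)$ for all $0 \le \varepsilon < \varepsilon_0$, where $r$ is a radius on which $f \in C^{p+1}$. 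On this range Lemma \ref{l1} applies verbatim and yields exactly the first line of (\ref{abd}), namely
\begin{equation*}
f\left( u(\varepsilon) \right) = \sum_{|\alpha| \le p} \frac{D^{\alpha} f(u_0)}{\alpha !} \left( u(\varepsilon) - u_0 \right)^{\alpha} + R_p^{u(\varepsilon)}\left( f^{(p+1)}(u_0, u(\varepsilon)) \right).
\end{equation*}

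For the second equality I would simply insert the expansion (\ref{eqn:2}), written as $u(\varepsilon) - u_0 = \sum_{l=1}^{N} \varepsilon^l u_l + R_N^u(\varepsilon)$, into each multi-index power $\left( u(\varepsilon) - u_0 \right)^{\alpha}$ occurring in the sum above. This substitution leaves the remainder $R_p^{u(\varepsilon)}$ untouched, as it is still expressed through $u(\varepsilon)$, so one lands precisely on the second line of (\ref{abd}). No inequality is required at this stage: the proposition as stated asserts only the identity, not yet a bound on the resulting remainder, so the step is exact and algebraic.

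The points requiring genuine care — and hence the main, rather mild, obstacle — are matters of bookkeeping rather than analysis. One must verify the domain condition $u(\varepsilon) \in B(u_0,r)$ for small $\varepsilon$, so that Taylor's formula of Lemma \ref{l1} is legitimately invoked; this is exactly where the smoothness of $f$ and the smallness of $\varepsilon$ enter. One should likewise reconcile the compressed multi-index notation of Lemma \ref{l1}, in which the symbol $\alpha$ appears both as the summation index inside the remainder and in the prefactor $(x-x_0)^{\alpha}$, so that $R_p^{u(\varepsilon)}$ is read unambiguously as the full sum over $|\alpha| = p+1$. Once these conventions are fixed, the proposition follows at once from Lemma \ref{l1} and the substitution of (\ref{eqn:2}), with no further computation.
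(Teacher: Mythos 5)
Your proof is correct and takes essentially the same route as the paper's, which simply invokes Lemma \ref{l1} with $x$ replaced by $u(\varepsilon)$ and $x_0$ by $u_0$, then reads off the second line via the substitution $u(\varepsilon)-u_0=\sum_{l=1}^{N}\varepsilon^l u_l+R_N^u(\varepsilon)$. The only addition you make, the domain check $u(\varepsilon)\in B(u_0,r)$ for small $\varepsilon$, is in fact not needed: since $f\in C^{p+1}(\R^d)$ globally, Taylor's formula applies on any ball containing $u(\varepsilon)$, with no smallness restriction on $\varepsilon$.
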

\begin{proof}
This is immediate from lemma \ref{l1}, with $x\,\in\,\R^d$ replaced
by $u(\varepsilon)\,\in\,\R^d$ and $x_0$ replaced by
$u_0\,\in\,\R^d,$ and denoting the remainder $R_p$ in (\ref{haj}) by
$R_p^{u(\varepsilon)}$, to recall that it refers to the function
$f(u(\varepsilon))$ instead of $f(x)$.
\end{proof}
\begin{rem}
We point out that the remainder $R_{N}^u(\varepsilon)$ in
(\ref{eqn:2}), referring to the asymptotic expansion of $u$, should
not be confused with the remainder $R_p^{u(\varepsilon)}$
in(\ref{abd}), for $p=N$ (which refers to the function f(u)).
\end{rem}
Now for any $N \in \mathbb{N}_0$ we have, by the definition of
$x^{\alpha}$ given before and the binomial formula for the powers of
the components $y_i$ of the vector $y=\sum_{l=1}^{N} \varepsilon^l
u_l + R_N^u( \varepsilon)$ in $\R^d$ on the l.h.s of (\ref{abd1}),
to be taken to the multi power $\alpha,$ i.e.
$y^{\alpha}=\dprod_{i=1}^d\,y_i^{\alpha_i}$.

\begin{align}\label{abd1}
    \left( \sum_{l=1}^{N} \varepsilon^l u_l + R_N^u( \varepsilon) \right)^{\alpha} = \dprod_{i=1}^d\,\Big[\sum_{\substack{\alpha_{1,i},\ldots,\alpha_{{N+1},i}=0 \\
    \alpha_{1,i}+\cdots+\alpha_{{N+1},i} =  \alpha_i }}^{ \alpha_i } \frac{ \alpha_i  !}{ \alpha_{1,i}! \cdots \alpha_{{N+1},i}!}
    \varepsilon^{\alpha_{1,i} + 2\alpha_{2,i} + \cdots + N\alpha_{N,i}} u_{1,i}^{\alpha_{1,i}} u_{2,i}^{\alpha_{2,i}} \cdots u_{N,i}^{\alpha_{N,i}}\nonumber\\ \left( R_{N,i}^u(\varepsilon) \right)^{\alpha_{{N+1},i}}
    \Big] \text{,}
\end{align}
$u_{j,i}$ is the $i-$th component of the vector $u_j,\,j=1,...,N;
i=1,...,d$, and $R_{N,i}^u(\varepsilon)$ is the $i-$th component of
the vector $R_{N}^u(\varepsilon)$.\\
Note that $\left( R_{N,i}^{u}( \varepsilon)
\right)^{\alpha_{{N+1,i}}}$ is bounded in norm by a positive
constant $\left( C_N^{u}\right)^{\alpha_{{N+1},i}}$ times
$\varepsilon^{(N+1)\alpha_{{N+1},i}}$ (since $\left|
R_N^u(\varepsilon) \right| \le C_N^u\, \varepsilon^{N+1}$, $C^u_N >
0$ from (\ref{eqn:2}) ). We also point out that
$\alpha_i\,\in\,\N_0,\,i=1,...,d$ are the components of
$\alpha\,\in\,\N_0^d,$ whereas the $\alpha_{j,i}\,\in\,\N_0
,\,j=1,...,N+1$ are restricted by the conditions appearing under the summation.\\
Thus we get, from equations (\ref{abd}), (\ref{abd1}) and by the
assumption in Proposition \ref{NA}, for $N\,\in\,\N_0,
p\,\in\,\N_0$:

\begin{eqnarray}\label{abd2}
    f \left( u(\varepsilon) \right) &=& \sum_{\left| \alpha \right| \le p} \frac{D^{\alpha} f\left( u_0 \right)}{\alpha !} \dprod_{i=1}^d\,\Big[\sum_{\substack{\alpha_{1,i},\ldots,\alpha_{{N+1},i}=0 \\
    \alpha_{1,i}+\cdots+\alpha_{{N+1},i} =  \alpha_i }}^{ \alpha_i } \frac{ \alpha_i  !}{ \alpha_{1,i}! \cdots \alpha_{{N+1},i}!}
    \varepsilon^{\alpha_{1,i} + 2\alpha_{2,i} + \cdots + N\alpha_{N,i}}\nonumber\\&& u_{1,i}^{\alpha_{1,i}} u_{2,i}^{\alpha_{2,i}} \cdots u_{N,i}^{\alpha_{N,i}} \left( R_{N,i}^u(\varepsilon) \right)^{\alpha_{{N+1},i}}
    \Big]   + R_p^{u(\varepsilon)} \left( f^{(p+1)} \left( u_0,u(\varepsilon)
     \right) \right),
\end{eqnarray}
with $\alpha= (\alpha_1,\cdot\cdot\cdot,\alpha_d)\,\in\,\N_0^d.$\\
We can rewrite (\ref{abd2}) by grouping the terms with the same
power $k$ of $\varepsilon, k\,\leq\,N.$ Denote the term of exact
order $k,\, 0\leq k \leq N, $ in $\varepsilon$ appearing on the
right hand side of (\ref{abd2}) by $[f \left( u(\varepsilon)
\right)]_k$. To compute it we first observe that we have to take
$\alpha_{N+1,i}=0,$ otherwise, due to the bound on $R_N$, the
effective presence of $\left( R_{N,i}^{u}( \varepsilon) \right) $
would give a term bounded by $\varepsilon^{N+1},$ with  $N+1\,>k.$
Then we have to take the sum over the $\alpha_{j,i}\,\in\,
\{0,1,...,\alpha_{i}\},\,j=1,...,N,\,i=1,...,d$ restricted by
$\alpha_i\,\,\in \N_0$ and satisfying:
\begin{enumerate}
\item $\ds_{i=1}^d\,\ds_{j=1}^N\,j\,\alpha_{j,i}=k$,
\item $\alpha_i=\ds_{j=1}^N\,\alpha_{j,i}$.
\end{enumerate}
For $k=0$ we must then have $\alpha_{1,i}=0$ for all $j,i$ and thus:
\begin{equation}\label{f0}[f \left( u(\varepsilon)
\right)]_0=f(u_0).
\end{equation}
For $k=1$ we have from (\ref{eqn:1}) that $\alpha_{1,i}=1$ for some
$i,$ all other $\alpha_{j,k}$ being $0.$ This implies $\alpha_i=1,
\alpha_l=0$ for all $l\neq i.$ Inserting this into (\ref{abd2}) we
get $[f \left( u(\varepsilon) \right)]_1= \sum_{i=1}^d\, {\partial
\over {\partial\,y_i}}\,f(y)_{\mid_{y=u_0}}\,u_{1,i}$. Introducing
the short notation $\partial_i\,f(u_0):={\partial \over
{\partial\,y_i}}\,f(y)_{\mid_{y=u_0}},$ we have thus:
\begin{equation}\label{f1}
[f \left( u(\varepsilon)
\right)]_1=\sum_{i=1}^d\,\partial_i\,f(u_0)\,u_{1,i}.
\end{equation}
For $k=2$ we have from (\ref{eqn:1}):
$\sum_{i=1}^d\,\sum_{j=1}^N\,j\,\alpha_{j,i}=\sum_{i=1}^d\,\sum_{j=1}^2\,j\,\alpha_{j,i}=2.$
This gives the possibility $j=2$ and $\alpha_{2,i}=1$ for some $i,
\alpha_{l,l'}= 0, \,l\neq\,2,\,l'=1,...,d.$ This provides the
contribution $ \sum_{i=1}^d\,\partial_i\,f(u_0)\,u_{2,i}$ to $[f
\left( u(\varepsilon) \right)]_2$. Another contribution is given by
the case $j=1$ and $\alpha_{1,i}=1, \alpha_{1,i'}=1$ for some $i,
i'=1, i\neq i'$, with $\alpha_{j,l}= 0,\,\forall\,j\neq 1,
\forall\,l$ and $\alpha_{1, m}=0, \forall m \neq i'.$ In this case
we get the contribution:
\begin{equation}\ds_{i=1}^d\,\ds_{i'=1\\ \atop i'
\neq i}^d\,{{\partial^2} \over {{\partial y_i} \, {\partial
y_{i'}}}}\,f(y)_{\mid_{y =u_0} }u_{1, i}\,u_{1,i'},\,\,\,to\,\,\,[f
\left( u(\varepsilon) \right)]_2.
\end{equation}
Denoting $\partial_i\partial_{i'}\,f(u_0):={{\partial^2} \over
{{\partial y_i} \, {\partial y_{i'}}}}\,f(y)_{\mid_{y =u_0} },$ we
have in total:
\begin{equation}\label{f2}[f \left( u(\varepsilon)
\right)]_2=\ds_{i=1}^d\,\partial_i\,f(u_0)\,u_{2,i}\,+\,{1 \over
{2!}}\,\ds_{i=1}^d\,\sum_{i'=1\\ \atop i' \neq
i}^d\,\partial_i\partial_{i'}\,f(u_0)\,u_{1, i}\,u_{1,i'}.
\end{equation}
In a similar way we can get the contribution $[f \left(
u(\varepsilon) \right)]_k$ of order $k\,\geq\,3$. It is easy to see
that it contains the term $u_{k, i}$ only linearly and that it
depends in a homogeneous way of total order $k$ in components of the
coefficients $u_{k-1}, u_{k-2},...,u_1, u_0.$ Thus introducing the
short notation:
\begin{equation}
\partial_1 \cdot\cdot\cdot \partial_k\,f(u_0):={\partial^k \over {\partial y_1 \cdot\cdot\cdot \partial
y_k}}\,f(y)_{\mid_{y =u_0} },
\end{equation}
we have
\begin{equation}\label{hiii}
[f \left( u(\varepsilon)
\right)]_k=\ds_{i=1}^d\,\partial_i\,f(u_0)\,u_{k,i}\,+\,{1 \over
{k!}}\,{\ds_{i_1,...,i_k=1}^d}^\star\,\partial_{i_1} \cdot\cdot\cdot
\partial_{i_k}\,f(u_0)\,u_{1, i_1}u_{1, i_2}\cdot\cdot\cdot u_{1,
i_k} + B_k^{f}(u_0,...,u_{k-1}),
\end{equation}
for some function $B_k^f$ which is a sum of monomials in components
of the variables $u_0,....,u_{k-1}$. The short notation
${\sum_{i_1,...,i_k =1}^d}^\star$ \,\,stands for the sum over all
$i_1,..., i_k,\,i_j\,\in\,\{1,...,d\},j=1,...,k,$ where all $i_j$
are
different.\\
By replacing simply $f$ by $\beta_l$ formula (\ref{hiii}) yields the
term of exact order $k$ in the asymptotic expansion in powers of
$\varepsilon$ of the $l-$th components of the coefficient
$\beta(u(\varepsilon))$ in (\ref{eqn:1}).\\\\
In order to get a corresponding expansion in powers of $\varepsilon$
for the matrix elements
$(\sigma_{\varepsilon})_{l,l'}(u(\varepsilon)))$,\\$l,l'=1,...,d$ of
the matrix $\sigma_{\varepsilon}$ in powers of $\varepsilon$ we have
to take care of the fact that, as opposite to $f$ and
$\beta_l,\,(\sigma_{\varepsilon})_{l,l'}$ also depends on
$\varepsilon,$ not only on its argument. \\Let us assume that:
\begin{equation}\label{abd3}
    \sigma_{\varepsilon} (\, x \,) = \sum_{i=0}^{M} \sigma_i(\, x \,) \varepsilon^i +
    R_M^{\sigma}(\varepsilon)(x),\,\,for\,any\,x\,\in\,\R^d.
\end{equation}

with $\dsup_{x\,\in\,\R^d}\parallel R_M^{\sigma} (\, x \, ,
\varepsilon) \parallel \le C_{M,\sigma} \varepsilon^{M+1}$,
($\parallel.\parallel$ denoting here the norm of the matrix
$R_M^{\sigma} (\, x \, , \varepsilon)$),\,$\sigma_i \,\,a\,\,d\times
d$ matrix and $C_{M,\sigma}
> 0$. 
Let us also assume that the elements
$(\sigma_i(x))_{l,l'},\,l,l'=1,...,d$ of the matrix $\sigma_i(x)$
belong to  ${\cal C}^{s+1}(\R^d)$ as functions of $x\,\in\,\R^d.$\\
For all $i=0,\ldots,M$, $M \in \mathbb{N}_0$ and for any $N \in
\mathbb{N}_0$, $s \in \mathbb{N}_0$ we have, from (\ref{rah}),
(\ref{abd3}), using the r.h.s of (\ref{abd2}):

\begin{align}\label{omra}
    \sigma_{\varepsilon} \left( u(\varepsilon) \right) &= \sum_{j=0}^{M} \sigma_j \left( u(\varepsilon) \right) \varepsilon^j + R_M^{\sigma} (\varepsilon) \nonumber\\
    &= \sum_{j=0}^{M} \sigma_j \left( \sum_{k=0}^{N} \varepsilon^k u_k + R_N^u (\varepsilon) \right) \varepsilon^j + R_M^{\sigma}(\varepsilon) \nonumber \\
    &= \sum_{j=0}^{M}\,\varepsilon^j\, \left[ \sum_{\left| \gamma \right| \le s} \frac{D^{\gamma}\sigma_j(u_0)}{\gamma !} \left( \sum_{k=0}^{N} \varepsilon^k u_k + R_N^u(\varepsilon) - u_0 \right)^{\gamma} + R_s^{\sigma_j} \right] + R_M^{\sigma}(\varepsilon) \nonumber \\
    &= \sum_{j=0}^{M}\,\varepsilon^j\, \left[\sum_{\left| \gamma \right| \le s} \frac{D^{\gamma}\sigma_j(u_0)}{\gamma !} \dprod_{i=1}^d \sum^{\gamma_i}_{\substack{\gamma_{1,i},\ldots,\gamma_{{N+1},i}=0 \\
    \gamma_{1,i}+\cdots+\gamma_{{N+1},i} =  \gamma_i }} \frac{\gamma_i !}{ \gamma_{1,i}! \gamma_{2,i}! \cdots \gamma_{{N+1},i}!} \varepsilon^{ \gamma_{1,i} + 2\gamma_{2,i} +
     \cdots + N\gamma_{N,i}} \right. \nonumber \\
    &\qquad \left. u_{1,i}^{\gamma_{1,i}} u_{2,i}^{\gamma_{2,i}} \cdots u_{N,i}^{\gamma_{N,i}} \left( R_N^{u(\varepsilon)}(\varepsilon) \right)^{\gamma_{{N+1},i}} + R_s^{\sigma_j} \right] +
    R_M^{\sigma}(\varepsilon)
\end{align}
Here $R_s^{\sigma_j} $ is a short notations for $R_s^{\sigma_j}
\Big(\ds_{k=0}^N\,\varepsilon^k\,u_k+\,R^{u}_{N}(\varepsilon)\Big)$.\\
Let us note that (\ref{omra}) is a relation between matrices, to be
understood elements by elements. $D^{\gamma}\sigma_i(u_0)$ has to be
interpreted as $D^{\gamma}$ applied to the elements
$(\sigma_i)_{l,l'},\,\,l,l'= 1,...,d$ of the
matrix $\sigma_i,$ evaluated then at $u_0.$\\
Proceeding as in the case of the expansions of $f$ and $\beta_l$ we
exhibit the coefficient $[\sigma_{\varepsilon} \left( u(\varepsilon)
\right)]_k$ of the power $k,\,0 \leq k \leq \min(M,N) $ in the
development of $\sigma_{\varepsilon, l,l'} \left( u(\varepsilon)
\right)$ on the right hand of (\ref{omra}). We shall write
$[\sigma_{\varepsilon} \left( u(\varepsilon) \right)]_k $ in matrix
form, but it should be understood element by element. As we did for
$f$ we have to set for this $\gamma_{{N+1},i} =0,\,i=1,...,d.$
Moreover we observe that (\ref{omra}) contains a sum of products of
$\varepsilon^j$ times a sum of terms with power $\sum_{i=1}^d\,
\left(\gamma_{1,i}+2\,\gamma_{2,i}+\,\cdot\cdot\cdot+N\,\gamma_{N,i}\right)$
in $\varepsilon$, hence the analogues of (\ref{eqn:1}),
(\ref{eqn:2}) we had for $[f \left( u(\varepsilon) \right)]_k$ are:
\begin{enumerate}
\item $j+
\ds_{i=1}^d\,\ds_{l=1}^N\,l\,{\gamma_{l,i}}=k,\,\,i=0,...,M$.
\item $\gamma_i=\ds_{l=1}^N\,{\gamma_{l,i}},$
with $\gamma_{l,i}=\{0,1,....,\gamma_i \}, l=1,...,N, i=1,...,d,
\gamma_i\,\in\,\N_0.$
\end{enumerate}

We see from 1. that we must have $j \leq k.$ Let us first compute
the terms for $k=0,1,2.$ We have:
\begin{equation}\label{s0}
[\sigma_{\varepsilon} \left( u(\varepsilon) \right)]_0=\sigma_{0}
\left( u_0 \right),
\end{equation}
 since $k=0$ implies $j=0, \gamma_{j,i}=0\,\,for\,\,all\,j,i.$ To obtain $[\sigma_{\varepsilon}
\left( u(\varepsilon) \right)]_1$ we observe that from 1 we have the
possibilities a) $j=0$ and $\gamma_{1,i}=1$ for some
$i,\,\gamma_{1,l}=0\,\,\forall\,l\neq\,i,\,\,
\gamma_{2,i}=...=\gamma_{N,i}=0, $ for all $d\,\in\,\N,$ or b)
$j=1,\,\gamma_{l,i}=0,\,\,\forall\,l,i.$ Thus we have:
\begin{equation}\label{s1}
[\sigma_{\varepsilon} \left( u(\varepsilon)
\right)]_1=\ds_{i=1}^d\,\partial_i\,\sigma_{0} \left( u_0
\right)\,u_{1,i}+\sigma_{1} \left( u_0 \right).
\end{equation}
For $k=2$, we have for $j=0$ only the possibilities we already
discussed for $[f \left( u(\varepsilon) \right)]_2$, so we get a
contribution
\begin{equation}\ds_{i=1}^d\,\partial_i\,\sigma_0(u_0)\,u_{2,i}\,+\,{1 \over
{2!}}\,\ds_{i,i'=1\\ \atop i' \neq
i}^d\,\partial_i\partial_{i'}\,\sigma_0(u_0)\,u_{1, i}\,u_{1,i'}.
\end{equation}
For $j=1$ we have the possibilities given by $\sum_{i=1}^d\,
\left(\gamma_{1,i}+2\,\gamma_{2,i}+\,\cdot\cdot\cdot+N\,\gamma_{N,i}\right)=1,$
which are those discussed for $[f \left( u(\varepsilon) \right)]_1$,
hence we get the contribution
$\sum_{i=1}^d\,\partial_i\,\sigma_1(u_0)\,u_{1,i}+\sigma_2(u_0)$. In
total then we get for any $l, l'=1,...,d:$
\begin{equation}\label{s2}[\sigma_{\varepsilon}\left( u(\varepsilon)
\right)]_2=\ds_{i=1}^d\,\partial_i\,\sigma_{0}(u_0)\,u_{2,i}\,+\,{1
\over {2!}}\,\ds_{i,i'=1\\ \atop i' \neq
i}^d\,\partial_i\partial_{i'}\,\sigma_{0}(u_0)\,u_{1,
i}\,u_{1,i'}+\ds_{i=1}^d\,\partial_i\,\sigma_1(u_0)\,u_{1,i}+\sigma_2(u_0).
\end{equation}
In the general case we see that
\begin{equation}\label{mama}
[\sigma_{\varepsilon} \left( u(\varepsilon)
\right)]_k=\ds_{i=1}^d\,\partial_i\,\sigma_{0}(u_0)\,u_{k,i}\,+\,{1
\over {k!}}\,{\ds_{i_1,...,i_k=1}^d}^\star\,\partial_{i_1}
\cdot\cdot\cdot
\partial_{i_k}\,\sigma_{0}(u_0)\,u_{1, i_1}u_{1, i_2}\cdot\cdot\cdot u_{1,
i_k}+\sigma_{k}(u_0) + A_k^{\sigma}(u_0,...,u_{k-1}),
\end{equation}
where $A_k^{\sigma}(u_0,...,u_{k-1})$ is a $d\times d$ matrix which
depends only on the indicated variables. \\
We shall now apply the formulae we have obtained for $[\beta \left(
u(\varepsilon) \right)]_k$  and $[\sigma_{\varepsilon} \left(
u(\varepsilon) \right)]_k,\,k\,\in\,\N_0,$ to the case where
$u(\varepsilon)$ is replaced by the pathwise solution
$u_{\varepsilon}(s)$ of (\ref{eqn:1}), assumed to exist and to have
an asymptotic expansion in $\varepsilon$ of the form (\ref{eqn:2}).
By matching coefficients of the same order $k$ on both sides of
(\ref{eqn:1}), i.e. $u_k(t)$ resp. $\int_{0}^t\,[\beta \left(
u_{\varepsilon}(s) \right)]_k\,ds$ and
$\int_{0}^t\,[\sigma_{\varepsilon} \left( u_{\varepsilon}(s)
\right)]_k\,\eta(ds),$ we get the following Proposition:

\begin{prop}\label{haffa}
Let us assume that the coefficient $\sigma_{\varepsilon}$ is ${\cal
C}^{M+1}$ in $\varepsilon\,\in\,[0,\,\varepsilon_0)$ for some
$\varepsilon_0\,>0,\,$ in the sense that (\ref{abd3}) holds.
Moreover assume that $\beta \in {\cal
C}^{p+1}(\R^d),\,\sigma_{\varepsilon} \,\in\,{\cal C}^{s+1}(\R^d),$
for any $\varepsilon\,\in\,[0,\varepsilon_0),$ for some
$p\,\in\,\N_0,\,s\,\in\,\N_0.$\\
The stochastic equation (\ref{eqn:1}) has a pathwise solution
$u_{\varepsilon}$ for all $t\,\in\,[0,T],\,T> 0$ and the solution
$u_{\varepsilon}(t)$ is ${\cal C}^{m+1},\, $for some $m\,\in\,\N$,
in $\varepsilon\,\in\,[0,\,\varepsilon_0),\,$ i.e. (\ref{eqn:2})
holds. Then the expansion coefficients $u_k$ of (\ref{eqn:1}) in
(\ref{eqn:2}) satisfy the following equations:
\begin{equation}\label{zero}
   {u}_0(t) = u^0\,+ \int_0^t \beta(u_0)\,ds + \int_0^t\,\sigma_0(u_0)\eta(ds) \text{;}
\end{equation}
\begin{eqnarray}\label{first}
u_1(t)&=&\sum_{i=1}^d\,\Big[\int_0^t\,\partial_i\,\beta(u_0)\,u_{1,i}ds+\,\int_0^t\,\,\partial_i\,\sigma_{0}
\left( u_0 \right)\,u_{1,i}\,\eta(ds)\Big]+\int_0^t\,\sigma_{1}
\left( u_0
\right)\eta(ds)\nonumber\\&=&\int_0^t\,[\beta_{\varepsilon} \left(
u_{\varepsilon}(s) \right)]_1\,ds +\,\int_0^t\,[\sigma_{\varepsilon}
\left( u_{\varepsilon}(s) \right)]_1\, \eta(ds),
\end{eqnarray}
\begin{eqnarray}\label{second}
u_2(t)&=&
\ds_{i=1}^d\,\Big[\int_0^t\,\partial_i\,\beta(u_0)\,u_{2,i}\,ds+\,{1
\over
{2!}}\,\int_0^t\,{\ds_{i=i'=1}^d}^{\star}\,\partial_i\partial_{i'}\,\beta(u_0)\,u_{1,
i}\,u_{1,i'}ds\Big]\nonumber\\&+&\ds_{i=1}^d\,\int_0^t\,\Big[\partial_i\,\sigma_{0}(u_0)\,u_{2,i}\,+\,{1
\over
{2!}}\,{\ds_{i,i=1}^d}^{\star}\,\,\partial_i\partial_{i'}\,\sigma_{0}(u_0)\,u_{1,
i}\,u_{1,i'}\nonumber\\&+&\,\partial_i\,\sigma_1(u_0)\,u_{1,i}\,\Big]\eta(ds)+\int_0^t\,\sigma_2(u_0)\,\eta(ds)
\nonumber\\&=&\int_0^t\,[\beta \left( u_{\varepsilon}(s)
\right)]_2\,ds +\,\int_0^t\,[\sigma_{\varepsilon} \left(
u_{\varepsilon}(s) \right)]_2\, \eta(ds),
\end{eqnarray}
and for all $1\leq\,k\leq \min(Mp,\,Ns)$:
\begin{equation}\label{hiddi}
{u}_{k}(t) = \int_0^t\,[\beta \left( u_{\varepsilon}(s)
\right)]_k\,ds +\,\int_0^t\,[\sigma_{\varepsilon} \left(
u_{\varepsilon}(s) \right)]_k\, \eta(ds),
\end{equation}
where $[\beta \left( u_{\varepsilon}(s) \right)]_k$ and
$\,[\sigma_{\varepsilon} \left( u_{\varepsilon}(s) \right)]_k$ are
given in (\ref{hiii}), (by replacing $f$ by $\beta$), resp. in
(\ref{mama}) ($u(\varepsilon)$ being replaced by
$u_{\varepsilon}(s).$)
\end{prop}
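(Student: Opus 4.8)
The plan is to substitute the assumed asymptotic expansion (\ref{eqn:2}) of the pathwise solution $u_\varepsilon(s)$ directly into the integral equation (\ref{eqn:1}) and then to identify the coefficient of each power $\varepsilon^k$ on the two sides. On the left-hand side the coefficient of $\varepsilon^k$ is, by (\ref{eqn:2}), simply $u_k(t)$. On the right-hand side I would insert the expansions already computed in (\ref{hiii}) (with $f$ replaced by $\beta$) and in (\ref{mama}) for the two integrands, so that
\[
\beta(u_\varepsilon(s)) = \sum_{k=0}^{N}\varepsilon^k\,[\beta(u_\varepsilon(s))]_k + (\text{rem.}),\qquad \sigma_\varepsilon(u_\varepsilon(s)) = \sum_{k=0}^{\min(M,N)}\varepsilon^k\,[\sigma_\varepsilon(u_\varepsilon(s))]_k + (\text{rem.}).
\]
Using the linearity of the Lebesgue integral $\int_0^t(\cdot)\,ds$ and of the (Stieltjes resp. L\'evy) integral $\int_0^t(\cdot)\,\eta(ds)$, these finite expansions are integrated termwise, producing on the right a sum $\sum_k \varepsilon^k\big(\int_0^t [\beta(u_\varepsilon(s))]_k\,ds + \int_0^t [\sigma_\varepsilon(u_\varepsilon(s))]_k\,\eta(ds)\big)$, together with the constant contribution $u^0$ entering only at order $k=0$, plus the integrated remainders.

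The matching is then completed by invoking uniqueness of the asymptotic expansion: if two expressions of the form $\sum_{k=0}^{N}\varepsilon^k a_k + O(\varepsilon^{N+1})$ and $\sum_{k=0}^{N}\varepsilon^k b_k + O(\varepsilon^{N+1})$ agree for all sufficiently small $\varepsilon$, then $a_k=b_k$ for every $k\le N$. Equating the coefficients of $\varepsilon^0$ gives (\ref{zero}); equating the coefficients of $\varepsilon^1$ and $\varepsilon^2$ and reading off $[\beta(u_\varepsilon)]_1,[\sigma_\varepsilon(u_\varepsilon)]_1$ from (\ref{f1}),(\ref{s1}) and $[\beta(u_\varepsilon)]_2,[\sigma_\varepsilon(u_\varepsilon)]_2$ from (\ref{f2}),(\ref{s2}) yields the explicit equations (\ref{first}) and (\ref{second}); and the general case reproduces (\ref{hiddi}) directly from the abbreviated forms (\ref{hiii}),(\ref{mama}), in the stated range of $k$.

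The main obstacle is not the algebra of the matching but the analytic justification that integration preserves the order of the remainders, in particular for the stochastic integral against $\eta$. First I would verify that the termwise expansions of $\beta(u_\varepsilon(s))$ and $\sigma_\varepsilon(u_\varepsilon(s))$ hold with remainders bounded \emph{uniformly in $s\in[0,T]$} by a constant times the appropriate power of $\varepsilon$; this uses Lemma \ref{l1} together with the hypotheses $\beta\in\mathcal{C}^{p+1}$, $(\sigma_j)_{l,l'}\in\mathcal{C}^{s+1}$, the bound on $R_M^\sigma$ in (\ref{abd3}), and the assumed bound $|R_N^u(\varepsilon)|\le C_N^u\varepsilon^{N+1}$, all of which require the relevant moments of $u_\varepsilon(s)$ to remain bounded on $[0,T]$. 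For the drift term the estimate on $\int_0^t(\text{rem.})\,ds$ then follows at once by integrating the uniform pointwise bound; for the noise term one must control $\int_0^t(\text{rem.})\,\eta(ds)$, which I expect to treat by a maximal/moment inequality of Burkholder--Davis--Gundy or Kunita type for integrals driven by a L\'evy process (or simply by a total-variation estimate in the signed-measure case), showing that the integrated noise remainder is again of order $\varepsilon^{N+1}$ uniformly in $t\le T$. Once this uniform control is in place, the termwise integration and the subsequent coefficient matching are legitimate and the Proposition follows.
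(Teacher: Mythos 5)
Your proposal is correct and follows essentially the same route as the paper: the paper's ``proof'' is precisely the substitution-and-coefficient-matching argument carried out in the text preceding the Proposition (using (\ref{hiii}) and (\ref{mama}) and equating powers of $\varepsilon$ on both sides of (\ref{eqn:1})), which is exactly your plan. Your additional remarks on justifying termwise integration of the remainders (uniform-in-$s$ bounds and moment estimates for the $\eta$-integral) go beyond what the paper does at this point --- the paper treats remainder control only later, in Section 5 --- but the core matching argument is the same.
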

\begin{proof}
The proof was already carried through before the Proposition.
\end{proof}
\begin{rem}\label{RHOUMA}
    \quad\vspace{-\baselineskip}
    \begin{enumerate}
    \item  For the existence and uniqueness of
    solutions of (\ref{eqn:1}) see sect. 3.
    \item If $\beta (\, x \,) = Ax + b$, ( with $b\,\in\,\R^d$ independent of $x$  and $A$ a $d\times d$-matrix independent of x) and
    $\sigma_{\varepsilon}(x) = \sigma_0 + \varepsilon \sigma_1 (x)
        $,\, with\,
         $\sigma_0=c$ and $(\sigma_1(x))_{l,l'}=\lambda_{l,l'}\,x_{l'},\,x\,\in\,\R^d,$ with $c,\,\lambda$ constant $\, d\times d-$ matrices,
         then $\sigma_i = 0$, $\forall\, i \geq 2$ and $\partial_{i_1}
\cdot\cdot\cdot\partial_{i_n}\beta = 0,\, \forall\, n  \geq\, 2$.
          Moreover, $(\partial_i\beta)_{l}(x)=\,{\partial\,\over\,{\partial
          {x_i}}}\ds_{k=1}^d\,A_{lk}\,x_k=A_{li},\,\,l=1,...,d $ and
          thus
          \begin{equation}\label{kh}
\ds_{k=1}^d\,\partial_i\beta(u_0)\,u_{k,i}=A\,u_k,\,k\,\in\,\N.
          \end{equation}
Moreover,
\begin{equation}\label{kk}
\partial_{i_1}
\cdot\cdot\cdot\partial_{i_k}\,\sigma_{0}(x)=0,\,\forall\,k\,\in\,\N,\,x\,\in\,\R^d,\,\,(\partial_i\sigma_1(x))_{l,l'}=\lambda_{l,l'}\,\delta_{l,l'},
\end{equation}
with $\delta_{l,l'}$ the Kronecker symbol, $l,l',i=1,...,d.$\\
Furthermore $\partial_{i_1}
\cdot\cdot\cdot\partial_{i_k}\,\sigma_{1}(x)=0,\,\forall\,k\,\geq2,\,\partial_{i_1}
\cdot\cdot\cdot\partial_{i_k}\,\sigma_{j}(x)=0,\,\forall\,j\,\geq2,\,k\,\in\,\N_0.$
Hence from (\ref{haffa}) we get:
\begin{equation}\label{kkk}
 u_0(t) = u^0+ \int_0^t\,Au_0(s)\,ds + b\,t + c\eta(t),
\end{equation}
and
\begin{equation}
({u}_{k}(t))_l=\int_0^t\,(A\,u_k(s))_l\,ds+\,\ds_{l'=1}^d\,\lambda_{l,l'}\,\int_0^t\,({u}_{k-1}(s))_{l'}\eta_{l'}(ds),\,\,l=1,...,d,\,k\,\in\,\N.
\end{equation}
\item $\beta$ and $\sigma_{\varepsilon}$ is as in 2., however with $\sigma_0=0$ replaced by $\sigma_0(x)=\Pi\,x,\,\Pi$ a constant
$d\times d-$matrix, then (\ref{kh}) holds, the first equation in
(\ref{kk}) is for $k=1$, replaced by
$(\partial_{i_1}\,\Pi\,x)_l=\Pi_{l,{i_1}}$, thus (\ref{kkk}) is
replaced by
\begin{equation}\label{kkkk}
 u_0(t) = u^0+ \int_0^t\,Au_0(s)\,ds + b\,t + \Pi\,\eta(t),
\end{equation}
and
\begin{equation}
({u}_{1}(t))_l=A\,\int_0^t\,(\,u_1(s))_l\,ds+\,\ds_{l'=1}^d\,\lambda_{l,l'}\,\int_0^t\,({u}_{0}(s))_{l'}\eta_{l'}(ds)\,\,+
\,\,\ds_{i=1}^d\,\Pi_{l,i}\,u_{1,\,i}\,\eta_i(ds),
\end{equation}
for $k\geq\,2,\, l=1,...,d:$
\begin{equation}
({u}_{k}(t))_l=\int_0^t\,(A\,u_k(s))_l\,ds+\,\ds_{l'=1}^d\,\lambda_{l,l'}\,\int_0^t\,({u}_{k-1}(s))_{l'}\eta_{l'}(ds)\,\,+
\,\,\ds_{i=1}^d\,\Pi_{l,i}\,u_{k,\,i}\,\eta_i(ds).
\end{equation}

\item If $\beta(x)=Ax+F(x),\,F\,\in {\cal C}^{p+1}(\R^d)$
and $\sigma_0=0,\,\sigma_1(x)=\,\Lambda,$ with $\Lambda$ a constant
matrix, so that (\ref{eqn:1}) has additive noise, then
\begin{equation}
u_0(t) = u^0+ \int_0^t\,Au_0(s)\,ds + \int_0^t\,F(u_0)\,ds,
\end{equation}
and
\begin{equation}
u_1(t) = \int_0^t\,Au_1\,ds +
\int_0^t\,\partial_i\,F(u_0)\,u_{1,i}ds+\Lambda\,\eta(t).
\end{equation}
The $u_k(t),\,k\geq\,2$ are given by linear nonhomogeneous
stochastic equations with random coefficients, depending only on the
$u_0,\,...,\,u_{k-1}$, without any external noise term. The
expansion is then a particular case of the one explicitly given in
\cite{ALDIBOU} (in the case where the Hilbert space is $\R^d$.)
        \item Equation (\ref{zero}) is of the same type as equation (\ref{eqn:1}) with $\varepsilon=0$. Only for
        $\sigma_0\equiv\,0$ we have a purely deterministic
        equation. The expansion in powers of $\varepsilon$ of the solution of (\ref{eqn:1}) is really useful whenever (\ref{zero}) can be better handled than the original
        equation (\ref{eqn:1}), which happens whenever $\sigma_0$ has a simpler dependence on $x$ than $\sigma_{\varepsilon}$ itself. See sect. 5, for more details.\\
Let us also underline that the equations (\ref{first})-(\ref{hiddi})
for the $u_k(t)$ are linear nonhomogeneous, with random coefficients
involving only $u_0,...,u_{k-1},$ hence to be solved recursively.
\item If the coefficient $\beta$ in (\ref{eqn:1}) depends itself on
$\varepsilon\,\in\,[0,\,\varepsilon_0)$ and is in ${\cal C}^{\tilde{
M}+1}$ as a function of $\varepsilon$, thus has an expansion
$\beta(x)=\sum_{i=1}^M\,\beta_i(x)\,\varepsilon^i\,+\,R^{\beta}_{\tilde{M}}(\varepsilon,
x),\,\forall\,x\,\in\,\R^d,$ then the expansion
(\ref{f0})-(\ref{hiii}) with $f$ replaced by any component of
$\beta$ has to be replaced by (\ref{s0})-(\ref{mama}), with the
matrix elements of $\sigma_{\varepsilon}$ replaced by the components
of
$\beta_{\varepsilon},\,i.e.\,\,[\beta_{\varepsilon}(u(\varepsilon))]_0=\beta_0(u_0),\,[\beta_{\varepsilon}(u(\varepsilon))]_1=\ds_{i=1}^d\,\partial_i\,\sigma_{0}
\left( u_0 \right)\,u_{1,i}+\sigma_{1} \left( u_0 \right)$ and
correspondingly for (\ref{s2}). (\ref{mama}) holds for $\beta$
replaced by $\beta_0$, whereas in the equations for the
$u_k(t),\,k\,\in\,\N$ we have to replace $[\beta \left(
u(s,\,\varepsilon) \right)]_k$ by $[\beta_{\varepsilon} \left(
u(s,\,\varepsilon) \right)]_k$, with
\begin{eqnarray}
[\beta_{\varepsilon} \left( u(s,\,\varepsilon)
\right)]_k&=&\ds_{i=1}^d\,\partial_i\,\beta_0(u_0)\,u_{k,i}\,+\,{1
\over {k!}}\,{\ds_{i_1,...,i_k=1}^d}^\star\,\partial_{i_1}
\cdot\cdot\cdot
\partial_{i_k}\,\beta_0(u_0)\,u_{1, i_1}u_{1, i_2}\cdot\cdot\cdot u_{1,
i_k} +\,\beta_k(u_0)\nonumber\\&+&A_k^{\beta}(u_0,...,u_{k-1})
\end{eqnarray}

\end{enumerate}
\end{rem}
\section{Existence and uniqueness results for the original SDE.}
\label{sec:1}

Let $\beta: \mathbb{R}^d \to \mathbb{R}^d$, $\beta = (\beta^1,\ldots,\beta^d)$, $\beta_i: \mathbb{R}^d \to \mathbb{R}$, $i=1,\ldots,d$, and let
 $\sigma = \left( \sigma_j^i \right)$, with $\sigma_j^i: \mathbb{R}^d \to \mathbb{R}, i,j=1,...,d.$  \\
We assume that $\beta$ is globally Lipschitz, i.e. $\left| \beta(x)
- \beta(y) \right| \le k_{\beta} \left| x-y \right|$, for all $x,y
\in \mathbb{R}^d,$ for some constant $k_{\beta} > 0$. \\
We also assume $\sigma_j^i$ are globally Lipschitz, i.e. $\left| \sigma_j^i(x) - \sigma_j^i(y) \right| \le k_{\sigma_j^i} \left| x-y \right|$,
for some constant $k_{\sigma_j^i}\,>0$ (independent of $x,y$) and all $x,y \in \mathbb{R}^d,\,i,j=1,...,d$. \\
Let $L(t)$ be a L\'{e}vy process on $\mathbb{R}^d$, without Gaussian
and deterministic component, i.e. with characteristic function:

\begin{equation}\label{BOB}
    E \left( e^{i \langle u,L(t) \rangle} \right) = e^{\int_{\mathbb{R}^d \setminus \{ 0 \}} \left( e^{i \langle u,y \rangle} - 1 - i \langle u, y \rangle\,\chi_{B}(y) \right) \nu (\mathrm{d}y)} \text{,}
\end{equation}

$u \in \mathbb{R}^d$, $B$ the unit ball in $\mathbb{R}^d$. $\nu$ is the intensity or L\'evy measure,
 satisfying\\ $\int_{\R^d\setminus \{0\}} (\mid y\mid^2\,\wedge\,1)\,\nu(dy)\,<\,\infty.$ For L\'evy processes see, eg., \cite{DA},\cite{MaRu}, \cite{KS} .\\
The following L\'{e}vy-It\^{o} decomposition holds (see,
e.g.,\cite{ABR1}, \cite{ABRW}, [\cite{DA}, p. 108--109],
\cite{MaRu}).

\begin{equation}
    L_t = \int_B x \tilde{N} (t,\mathrm{d}x) + \int_{\mathbb{R}^d \setminus B} x N(t,\mathrm{d}x) \text{,} \ t \geq
    0,
\end{equation}

with $N$ a Poisson random measure on $\mathbb{R}_{+} \times
(\mathbb{R}^d - \{ 0 \})$ (the Poisson random measure associated
with $\Delta Z_t := X_t - X_{t_{-}}$, i.e. $N([0,t) \times A) =  \{
0 \le s < t \vert \Delta Z_s \in A \}$, for each $t \geq 0$, $A \in
{\cal B}(\mathbb{R}^d \backslash \{ 0 \})$, $\tilde{N} (t,A) :=
N(t,A) - t \nu (A)$, for all $A \in {\cal B}(\mathbb{R}^d, \{ 0
\})$, $0 \in \overline{A}$). We have $\nu(A) = E(N(1,A))$; for each
$t>0$, $\omega \in \Omega$, $\tilde{N}(t, \, \cdot \,) (\omega)$ is
the compensated Poisson random measure (to ${N}(t, \, \cdot \,)
(\omega)$) on the Borel $\sigma-$algebra ${\cal B}(\mathbb{R}^d
\backslash \{ 0 \})$; $\tilde{N}(t,A)$, $t \geq 0$ is, in
particular,
 a martingale-valued measure. \\
 It is known that the solution $u(t)$ of (\ref{eqn:1})  with $\eta(ds)=\,dL_s+ b\,ds+\,dB_A(s),\,b\,\in\,\R^d,\,B_A$ a Brownian motion in $\R^d$ with covariance matrix $A$, can be
 identified with the solution $X_t$ of (\ref{Bacari})  see, e.g. \cite{DA}:

\begin{eqnarray}\label{Bacari}
    \mathrm{d} X_t(x) &=& \sigma_{\varepsilon}\,(X_{ t_{-}})\,b\,dt+\,\sigma_{\varepsilon}\,(X_{t_{-}})\,dB_A(t)+\, \beta (X_{t_{-}}) \mathrm{d}t + \int_{0 < \left| x \right| \le 1} \sigma_{\varepsilon}(X_{t_{-}}) \tilde{N} (\mathrm{d}t, \mathrm{d}x)
    \nonumber\\&+& \int_{\left| x \right| > 1} \sigma_{\varepsilon}(X_{t_{-}}) N(\mathrm{d}t, \mathrm{d}x)
\end{eqnarray}





The following theorem holds:

\begin{theo}
    If the coefficients $\beta,\sigma$ satisfy the above global Lipschitz and growth conditions and $\eta$ is as above then there exists a
    strong, c$\grave{a}$dl$\grave{a}$g, adapted solution of the SDE (\ref{eqn:1})
     or (\ref{Bacari}) and the
     solution is unique, for any initial condition $x_0 \in \mathbb{R}^d$.
\end{theo}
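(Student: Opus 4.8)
The plan is to establish existence and uniqueness by the method of successive approximations (Picard iteration) combined with a fixed-point argument, the one delicate point being that the minimal integrability assumed on the L\'evy measure, $\int_{\R^d\setminus\{0\}}(|y|^2\wedge 1)\,\nu(dy)<\infty$, does not force the driving process $L$ to be square integrable. I would therefore split the noise into a ``small-jump'' part (together with the Brownian and drift terms), which carries a clean $L^2$ theory, and a ``large-jump'' part $\int_{|x|>1}\sigma_\varepsilon(X_{s^-})\,x\,N(ds,dx)$, which is of finite activity since $\nu(\{|x|>1\})<\infty$, and treat the latter by interlacing.

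First I would fix a horizon $T>0$ and work on the Banach space $\cH_T$ of c\`adl\`ag adapted processes $X$ on $[0,T]$ with $\|X\|_T^2:=\E\big[\sup_{t\le T}|X_t|^2\big]<\infty$. On a generic subinterval containing no large jump the equation reduces to its drift, Brownian and compensated small-jump terms, and I would define the Picard map
\[ (\Phi X)_t = x_0 + \int_0^t\!\big(\beta(X_{s^-})+\sigma_\varepsilon(X_{s^-})b\big)\,ds + \int_0^t\!\sigma_\varepsilon(X_{s^-})\,dB_A(s) + \int_0^t\!\!\int_{0<|x|\le1}\!\sigma_\varepsilon(X_{s^-})\,x\,\tilde N(ds,dx). \]
Using Doob's maximal inequality for the martingale parts, the It\^o isometry for the Brownian integral and for the compensated Poisson integral (whose $L^2$ norm is controlled by $\int_{|x|\le1}|x|^2\,\nu(dx)<\infty$), together with the global Lipschitz bounds on $\beta$ and on the $\sigma_j^i$ and the linear growth they entail, I would derive the basic estimate
\[ \E\Big[\sup_{s\le t}|(\Phi X)_s-(\Phi Y)_s|^2\Big]\le C_T\int_0^t \E\Big[\sup_{r\le s}|X_r-Y_r|^2\Big]\,ds, \]
valid for all $X,Y\in\cH_T$. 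Iterating this inequality shows that the Picard iterates form a Cauchy sequence in $\cH_T$ (the $n$-fold iterated kernel contributing a factor $(C_T t)^n/n!$), so they converge to a fixed point, which is the unique c\`adl\`ag adapted solution of the small-jump equation; the linear growth keeps every iterate inside $\cH_T$.

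Next I would reinstate the large jumps by interlacing. Almost surely there are only finitely many jump times $0<\tau_1<\cdots<\tau_K\le T$ of the compound Poisson process $\int_{|x|>1}x\,N(\cdot,dx)$. On $[0,\tau_1)$ the solution is the one just constructed; at $\tau_1$ one adds the jump $\sigma_\varepsilon(X_{\tau_1^-})\,\Delta L_{\tau_1}$ and restarts from this new, still finite, initial value on $[\tau_1,\tau_2)$, and so on. Concatenating over the finitely many subintervals produces a solution on all of $[0,T]$, and since $T$ is arbitrary this yields a global strong, c\`adl\`ag, adapted solution.

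Finally, uniqueness follows from the same Gronwall mechanism: if $X$ and $Y$ are two solutions with the same initial datum, the estimate above (now applied to the full equation, interlaced jump by jump so that the large-jump increments cancel on each subinterval) gives $\E[\sup_{s\le t}|X_s-Y_s|^2]\le C_T\int_0^t\E[\sup_{r\le s}|X_r-Y_r|^2]\,ds$, whence $X=Y$ a.s.\ by Gronwall's lemma. The main obstacle is exactly the lack of a second moment for the large-jump part: without the interlacing reduction the standard $L^2$ Picard scheme cannot be closed, so isolating the finite-activity large jumps and handling them pathwise is the crux of the argument. The multiplicative, matrix-valued nature of $\sigma_\varepsilon$ adds only routine componentwise bookkeeping once the scalar Lipschitz estimates are in hand.
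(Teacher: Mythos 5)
Your proof is correct in substance, but note that the paper itself does not prove this theorem at all: its ``proof'' is a one-line citation to the literature (Gikhman--Skorokhod, Protter, Ikeda--Watanabe, and Albeverio--Mandrekar--R\"udiger), where the statement appears as a special case. What you have written out is precisely the argument contained in those references, in particular the treatment in Applebaum's book: the $L^2$ Picard/Gronwall scheme for the drift, Brownian, and compensated small-jump parts (where $\int_{0<|x|\le 1}|x|^2\,\nu(dx)<\infty$ makes the It\^o isometry and Doob's maximal inequality available), followed by interlacing of the large jumps, whose a.s.\ finite number on $[0,T]$ comes from $\nu(\{|x|>1\})<\infty$. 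So you are not on a genuinely different route; you have simply supplied, in outline, the proof that the paper delegates to its references. One technical point you should tighten: after the first large jump, the restart value $X_{\tau_1}=X_{\tau_1^-}+\sigma_\varepsilon(X_{\tau_1^-})\,\Delta L_{\tau_1}$ is a.s.\ finite but in general \emph{not} square integrable, since jumps of $L$ of size larger than one need not have finite second moment; hence you cannot literally rerun your Picard scheme in the space $\cH_T$ over $[\tau_1,\tau_2)$ with this initial datum. The standard fix---condition on $\mathcal{F}_{\tau_1}$, or truncate the initial value on the event $\{|X_{\tau_1}|\le n\}$, solve, and patch the resulting solutions together using pathwise uniqueness---is routine, but it must be said: as written, the phrase ``restarts from this new, still finite, initial value'' glosses over exactly the integrability issue that motivated your small-jump/large-jump decomposition in the first place.
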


\begin{proof}
    This is a particular case of results given, e.g., in [\cite{GiSk}, pp. 237], \cite{Pro}, [\cite{WAK}, p. 231] and \cite{ABRM}.
\end{proof}


\begin{rem}
Other existence and uniqueness conditions are known, see, e.g.
\cite{}. Particularly the Lipschitz conditions can be relaxed to
local ones, with a condition of at most linear growth at infinity
see, e.g., \cite{GiSk}. This (and the previous result) also holds
for the non autonomous case where $\beta,\,\sigma$ have an
additional explicit measurable dependence on $t$ and all constants
entering the Lipschitz and growth conditions are uniform in $t$.
\end{rem}
\section{Discussion of the equations for the expansion coefficients}

In this section we shall provide solutions as explicit as possible
to the equations (\ref{hiddi}), for the expansion coefficients of
the solution of (\ref{eqn:1}) in powers of the small parameter
$\varepsilon.$ We first observe that (\ref{hiddi}) is a
nonhomogeneous linear equation in $u_k$ of the form:

\begin{eqnarray}\label{galia}
du_{k,l}(t)&=&[\tilde{F}_{k,l}(t)+\,\ds_{l'=1}^d\,\tilde{\gamma}_{k,l,l'}(t,u_0)(t)\,u_{k,l'}(t)]dt+\ds_{j=1}^d\,\tilde{G}_{k,l,j}(t,u_0(t),u_k(t))\,d\eta_j(t)\nonumber\\&+&
\ds_{l'=1}^d\tilde{g}_{k,l,l'}(t,u_0)\,d\eta_{l'}(t),\,\,\,\,\,\,\,
k\,\in\,\N,\,l=1,...,d,
\end{eqnarray}
with
\begin{equation}\label{SAM}\left\{\begin{array}{lllll}\tilde{F}_{k,l}(t):=[\beta_l(u(t,\varepsilon))]_k\,\,\,(with\,\,[.]_k\,\,given\,\,by\,\,(\ref{f0})-(\ref{hiii})\,\,\,,
for\,\,k\geq\,2),\,\,\tilde{F}_{0,l}(t)=\tilde{F}_{1,l}(t)):=0;&
\\\,\,\tilde{\gamma}_{k,l,l'}(t)=\partial_{l'}\beta_{l}(u_0),\,l=l'=1,...,d,\,k\,\in\,\N,
\tilde{\gamma}_{0,l,l'}(t):=0;&\\
\,\,\tilde{G}_{k,l,j}(t,u_0(t),u_k(t))=\ds_{i=1}^d\,
\partial_i\sigma_{0,l,j}(u_0(t))\,u_{k,i},&\\
\,\,and,\,\,\,\,\, for\,\,\,\,\, k \geq\,2,\,\,&\\
 \tilde{g}_{k,l,l'}(t)={1
\over {k!}}\,{\ds_{i_1,...,i_k=1}^d}^\star\,\partial_{i_1}
\cdot\cdot\cdot
\partial_{i_k}\,\sigma_{0}(u_0)\,u_{1, i_1}u_{1, i_2}\cdot\cdot\cdot u_{1,
i_k}+\sigma_{k}(u_0) + A_k^{\sigma}(u_0,...,u_{k-1})
\end{array}
\right.
\end{equation}
We observe that (\ref{SAM}) constitues a set of recursive equations,
where the $k-$th order equations, for $X_{k,l},\,l\,\in\,\{1,...,d
\}$, only involves the components of $X_k,$ in a linear way, with
random coefficients $f_k,\,g_k$ depending on the vectors
$X_0,...,X_{k-1},$ and with a random inhomogeneity depending on
$X_{k,l'},\,l'\neq l$. It is thus of the form
\begin{eqnarray}
dX_{k,l}(t)&=&
f_k(X_0,...,X_{k-1})\,X_{k,l}\,dt\,+\,g_k(X_0,...,X_{k-1})\,X_{k,l}\,\eta(dt)+h_k(X_0)\,\eta(dt)\,\nonumber\\&+&\,h_k(X_{0,l'},...,X_{k-1,l'})\,dt,
\,\,\,l'=1,...,d,l'\neq l, l=1,...,d,\,k=1,...,K.
\end{eqnarray}
Under Lipschitz assumptions and at most polynomial growth at
infinity  in the space variable for $\beta,\,\sigma_{\varepsilon}$
and their derivatives up to order $K,$ we can apply  methods similar
to the one used in \cite{ALDIBOU, ALEBOU} (in the infinite
dimensional case, however with additive noise) to show that
existence and
uniqueness of solutions holds. Also proofs can be adopted to cover our case starting from literature on the martingale method, see, e.g., \cite{NEG}.\\
Yet still in the latter case and even for $\eta$ a Brownian motion
no
"explicit" solutions are known.\\
In general, even in the $1-$dimensional case, it is difficult to
find explicit solutions. We already see that the equation for $u_1$
is a nonhomogeneous linear stochastic differential equation for
$u_1$ involving random coefficients and an inhomogeneity depending
on the
solution $u_0$, and the coefficients are in general non linear in $u_0$.\\
In the $d-$dimensional case where $\sigma_1(y)=\,a\,y$ and
$\sigma_0(y)=\,b\,y$ for some constant matrices $a,\,b\,\in\,\R.$
and $\beta(x)=\,c\,x+d,$ for some $c,\,d\,\in\,\R,\,x\,\in\,\R^d,$
then the linear equations for $u_0,\,u_1$ have constant coefficients
and it is easy from (\ref{hiddi}), (\ref{hiii}) and (\ref{mama}), to
see that also the equations for the $u_k,\,k\,\geq\,2$ are also of
this type. In this case, at least for $\eta=B$ a Brownian motion, we
can apply results on systems of linear equations with terms of at
most first order in the state variables, which are to be found,
e.g., in \cite{GAR} and \cite{ARN},
to find an explicit expansion for $u_k.$\\
We can thus apply to the discussion of (\ref{hiddi}) results on the
solution of linear deterministic resp. stochastic evolution
equations, according to the following proposition:
\begin{prop} \label{mamia}Consider a system of $K$ coupled linear stochastic
evolution equation with random coefficients, the coefficients of the
equation for the $k-$th component $k=1,...,K$ being only dependent
of the components of index $0,1,2,...,k-1.$ The equation for the
$l-$th component of the $k-$th vector, $X_{k,l}$ is of the form:
\begin{equation}\label{eee1}\displaystyle  \begin{array}{lll} dX_{k,l}(t)= & [F_{k,l}(t)+\,\ds_{l'=1}^d\,\gamma_{k,l,l'}(t)\,X_{k,l'}(t)]dt+
\,\ds_{j=1}^m\,G_{k,l,j}(t,\,X_k(t))\,\,dB_j(t)+
\,\ds_{l''=1}^d\,g_{k,l,l''}(t)\,dB_{l''}(t),\\
  \end{array}
 \end{equation}
 with all components of $\gamma_k,\,g_k$ independent of $X_k$, and
 $F_{k,l}$ as well as $G_{k,l,j}$ linear in the components $X_{k,l}$ of $X_k$ and independent of other state variables.\\
 All coefficients $F,\,\gamma,\,G,\,g$ are supposed to be Lipschitz
 and satisfy the linear growth conditions, with constant uniform in
 $t$. The explicit dependence of all coefficients on $t$ is supposed to be
 measurable.\\
The solution of (\ref{eee1}) is given by :

\begin{eqnarray}\label{hh}
X_{k,l}(t)&=&
\ds_{k',l'}\Phi_{k,l,k',l'}(t)\Big\{\ds_{k'',l''}\dint_0^t\,
\Phi_{k',l',k'',l''}^{-1}(s)[F_{k'',l''}(s)-\,
G_{k'',l',l''}(s)\,g_{k'',l',l''}(s)]ds\nonumber\\&+&\,\ds_{k'',l''}\dint_0^t\,\Phi_{k',l',k'',l''}^{-1}(s)\,\,g_{k'',l',l''}(s)\,dB_{l''}(s)
\Big\},
\end{eqnarray}
 where the summation being over $k',k''=1,...,K$ and
 $l',l''=1,...,d,$ for all $k=1,...,K,\,l=1,...,d.$ For $k=0,\,X_{0,l}(t)$ is the solution of (\ref{zero}).\\
 $\Phi$ is the fundamental $Kd\times Kd$ matrix of
the corresponding homogeneous equation, i.e. the equation
(\ref{eee1}) with $F=g=0$, normalized so that $\Phi(0)$ is the unit
matrix, and the integrals being understood in It$\hat{o}'$s sense.
\label{POP}
\end{prop}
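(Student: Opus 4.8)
The plan is to treat (\ref{eee1}) as a single inhomogeneous linear It\^o SDE for the $Kd$-dimensional vector $X=(X_{k,l})$ and to establish (\ref{hh}) by the stochastic variation-of-constants (Duhamel) method. First I would write the system in compact matrix form
$$dX(t)=\big(F(t)+\gamma(t)X(t)\big)\,dt+\ds_{j=1}^m\big(G_j(t)X(t)+g_j(t)\big)\,dB_j(t),$$
where $\gamma(t)$ is the $Kd\times Kd$ matrix with entries $\gamma_{k,l,l'}$, each $G_j(t)$ is the matrix representing the linear map $X_k\mapsto G_{k,l,j}(t,X_k)$ (well defined since $G$ is assumed linear in $X_k$), and $F,g_j$ are the adapted inhomogeneous terms carrying the dependence on the lower-order components $X_0,\ldots,X_{k-1}$. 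Because the $k$-th block equation depends on blocks of lower index only through $F$ and $g_j$, the coefficients are adapted and, by hypothesis, Lipschitz with linear growth uniformly in $t$; existence and uniqueness of a strong solution, and of the fundamental matrix $\Phi$ solving the homogeneous equation $d\Phi=\gamma\Phi\,dt+\ds_{j=1}^m G_j\Phi\,dB_j$, $\Phi(0)=I$, then follow from standard linear-SDE theory, applied recursively over $k$.

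The core of the argument is It\^o calculus on $\Phi^{-1}X$. I would first establish invertibility of $\Phi$: its determinant solves a scalar linear SDE and is therefore a.s.\ strictly positive for all $t$, so $\Phi(t)^{-1}$ exists. Next I would derive the SDE for the inverse. Writing $\Phi\Phi^{-1}=I$ and applying the It\^o product rule, matching the $dB_j$ and $dt$ parts forces
$$d(\Phi^{-1})=\Phi^{-1}\Big[\big(\ds_{j=1}^m G_j^2-\gamma\big)\,dt-\ds_{j=1}^m G_j\,dB_j\Big].$$
Applying the It\^o product rule once more to $Y:=\Phi^{-1}X$ and collecting terms, the homogeneous contributions cancel — the drift cancellation uses precisely the $\ds_j G_j^2$ term of $d(\Phi^{-1})$ against the quadratic covariation of the two multiplicative martingale parts — leaving
$$dY=\Phi^{-1}\Big(F-\ds_{j=1}^m G_j g_j\Big)\,dt+\ds_{j=1}^m\Phi^{-1}g_j\,dB_j.$$
Integrating from $0$ to $t$ and left-multiplying by $\Phi(t)$, using $\Phi(0)=I$ and $X(0)=0$ for the coefficients $u_k$ with $k\geq1$ (the initial datum enters only through $u_0$), yields exactly (\ref{hh}) after restoring the component index notation; the case $k=0$ is the solution of (\ref{zero}).

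The main obstacle, and the feature distinguishing this from the deterministic variation-of-constants formula, is the It\^o correction term $-\ds_{j=1}^m G_j g_j$ in the drift of (\ref{hh}). It arises from the quadratic covariation between the martingale part of $\Phi^{-1}$, whose diffusion coefficient is $-G_j$, and the martingale part of $X$, whose diffusion coefficient is $G_jX+g_j$; only the $g_j$ piece survives the cancellation and contributes $-\ds_j G_j g_j$. Getting this cancellation right — verifying that the $\ds_j G_j^2 X$ term produced by $d(\Phi^{-1})$ cancels the one coming from the covariation, while the two $\gamma X$ contributions also cancel — is the only genuinely delicate computation; everything else is bookkeeping of the block and component indices $(k,l)$, which commute harmlessly with the recursion since the fundamental matrix of the homogeneous system is block-diagonal in $k$.
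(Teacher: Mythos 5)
Your proposal is correct and follows essentially the same route as the paper: the paper's proof likewise rests on It\^o's formula combined with the fundamental matrix $\Phi$ of the homogeneous system (the stochastic analogue of Lagrange's variation-of-constants method, with details delegated to Gard and Arnold). The only difference is directional — the paper verifies the formula by It\^o-differentiating the right hand side of (\ref{hh}) and matching it with (\ref{eee1}), whereas you derive it by computing $d(\Phi^{-1})$ and $d(\Phi^{-1}X)$ and integrating — and your computation correctly produces the It\^o correction term $-\sum_j G_j g_j$ that distinguishes (\ref{hh}) from the deterministic formula.
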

\begin{proof}
    The proof uses It$\hat{o}'$s formula to identify $dX_t$ as given by the right hand side of (\ref{hh}) with the right hand side of (\ref{eee1}).
    The presence of $\Phi(t)$ is for similar reasons as in
    Lagrange's method for systems of ODEs, see \cite{GAR} and \cite{ARN}, to which we refer for details.
\end{proof}
\begin{rem}\label{ra1}
For $K,\,d=1$, the fundamental $Kd\times Kd$ matrix reduces to a
scalar $\Phi$. In this case we have (see e.g., [\cite{GAR}, p.113]):

\begin{equation}\label{gg}
\Phi(t)=\exp{\Big(\dint_0^t\,[\gamma(s)-{1 \over
2}{G}^2(s)]ds\,+\,\dint_0^t\,{G}(s)\,dB(s)\Big)},
\end{equation}
 In the case where $\eta$ is the
sum of $B$ and a jump component we have instead:
\begin{equation}\label{haw}
\Phi(t)=\exp{\Big(\dint_0^t\,[\gamma(s)-{1 \over
2}{G}^2(s)]ds\,+\,\dint_0^t\,{G}(s)\,dB(s)\Big)}\dprod_{0< s\leq
t}(1+\Delta\eta_J(s))e^{-{\Delta\eta_J(s)}},
\end{equation}
where $\Delta\eta_J(s):= \eta_J(s)-\eta_J(s^{-}),$ is the jump of
$\eta_J$ between $s^{-}$ and $s$. The product term is the
Dol$\acute{e}$ans-Dade exponential of a jump process, see, e.g.
[\cite{DA}, p.247].
\end{rem}
\begin{rem}
\begin{enumerate}
\item The corresponding results holds also in the deterministic case where $B$ is
replaced by a function of bounded variation $\eta$ hold.\\
The concrete expression for $\Phi$ changes, due to the fact that
there is no corrections term in the exponents as in the Brownian
motions exponential. For example instead of (\ref{gg}) we simply
have then
\begin{equation}\label{ggg}
\Phi(t)=\exp{\Big(\dint_0^t\,\gamma(s)\,ds\,+\,\dint_0^t\,{G}(s)\,\eta(ds)\Big)},
\end{equation}
the second integral being a Stieltjes one.\\
\item In the case where $\eta$ contains a nontrivial jump component,
we were not able to find in the literature a general result of this
type.\\
For particular cases, e.g., where $\eta$ has also a component of
jump type, see however e.g. \cite{LAN}, \cite{DYJ}. As to be
expected in the exponent appearing in (\ref{gg}) an additional
Dol$\acute{e}$ans-Dade term (stochastic exponential of L\'evy jump
process) appears, see, e.g., [\cite{DA}, p.247].
\end{enumerate}
\end{rem}
As we stated before Proposition \ref{mamia}, that Proposition can be
applied to the case where $\eta=B$ and $\beta(x)=Ax\,+b\,,
\sigma_1(x)=\lambda\,x,\,\sigma_0(x)=\Pi\,x$ for all
$x\,\in\,\R^d,\,b\,\in\,\R^d$. Here $A,\,\lambda,\,\Pi$ are the
constant
$d \times d$ matrices discussed in remark \ref{RHOUMA}, 3.\\
From proposition \ref{mamia} and (\ref{SAM}) we get the following:
\begin{prop}\label{lp}
Let $\beta(x)=Ax\,+b\,, \sigma_1(x)=\lambda\,x,\,\sigma_0(x)=\Pi\,x$
for all $x\,\in\,\R^d,\,b\,\in\,\R^d$. $A,\,\lambda,\,\Pi$ are
constant $d \times d$ matrices. Consider the solution of the
equation $du=\,\beta(u)\,dt+\,\sigma_{\varepsilon}(u)\,\eta(ds).$\\
Let $u_k$ be the expansion coefficients which satisfy the equations
in Proposition \ref{haffa}. Then the $l-$th component $u_{k,l}$ of
$u_k$ is given by
\begin{equation}
u_{k,l}(t)=\ds_{k',l'}\,\Phi_{k,l,k',l'}(t)\Big\{ -\ds_{k'',i}
\,\dint_0^t\,
\Phi_{k',l',k'',i}^{-1}(s)\,u_{k-1,i}(s)\,\Big(\lambda_{l,i}\,u_{k'',i}(s)\,ds+\lambda_{l',i}\,dB_i(s)\,\Big)\,\Big\}.
\end{equation}
$\Phi$ is the fundamental matrix of the system
\begin{equation}
du_{k,l}=(Au_k)_l(s)\,ds\,+\,\ds_{i=1}^d\,\Pi_{l,i}\,u_{k,i}\,dB_i(s).
\end{equation}
\end{prop}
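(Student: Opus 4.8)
The plan is to recognize Proposition \ref{lp} as the specialization of Proposition \ref{mamia} to the affine drift and linear diffusion at hand, so that the explicit solution is produced simply by inserting the concrete coefficients into the variation-of-constants formula \eqref{hh}. No new analytic estimate is needed: everything reduces to identifying the abstract data $F,\gamma,G,g$ of \eqref{eee1} and tracking indices.

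First I would collapse the recursion of Proposition \ref{haffa} to its linear form. Since $\beta(x)=Ax+b$, $\sigma_0(x)=\Pi x$ and $\sigma_1(x)=\lambda x$ are affine resp. linear, every derivative of order $\geq 2$ vanishes, so in \eqref{hiii} (with $f=\beta$) and in \eqref{mama} all terms carrying a $\partial_{i_1}\cdots\partial_{i_k}$ with $k\geq 2$, together with the remainders $B_k^{\beta}$ and $A_k^{\sigma}$, drop out. As already recorded in Remark \ref{RHOUMA}.3, this leaves, for $\eta=B$ and each $l=1,\dots,d$,
\[
du_{k,l}(t)=(Au_k(t))_l\,dt+\ds_{i=1}^d \Pi_{l,i}\,u_{k,i}(t)\,dB_i(t)+\ds_{i=1}^d \lambda_{l,i}\,u_{k-1,i}(t)\,dB_i(t),
\]
where the first noise sum comes from $\sigma_0(u_\varepsilon)$ and is hence linear in the unknown $u_k$, while the second comes from $\varepsilon\,\sigma_1(u_\varepsilon)$ and is an inhomogeneity fixed by the previously computed $u_{k-1}$.

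Next I would match this equation termwise with \eqref{eee1}, reading off $F_{k,l}=0$, $\gamma_{k,l,l'}=A_{l,l'}$, the state-dependent noise coefficient $G_{k,l,i}(t,X_k)=\Pi_{l,i}X_{k,i}$, and the inhomogeneous noise coefficient $g_{k,l,i}(t)=\lambda_{l,i}\,u_{k-1,i}(t)$. Here the recursive (triangular) structure is essential: the homogeneous part of the $k$-th block involves only $X_k$ and is the \emph{same} for every $k$, so the fundamental $Kd\times Kd$ matrix $\Phi$ is block diagonal with a common block solving $du_{k,l}=(Au_k)_l\,ds+\ds_{i=1}^d\Pi_{l,i}u_{k,i}\,dB_i(s)$, which is exactly the homogeneous equation named in the statement; and $g_{k,l,i}$ is a genuinely known adapted process once the lower-order coefficients have been solved.

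Finally I would invoke Proposition \ref{mamia}: substituting $F\equiv 0$ and the above $G,g$ into \eqref{hh} annihilates the $F$-term and turns the drift correction $-G_{k'',l',l''}g_{k'',l',l''}$ and the stochastic integrand $g_{k'',l',l''}$ into the two explicit pieces displayed in Proposition \ref{lp}, both carrying the factor $u_{k-1,i}$ inherited from $\sigma_1$. The one point requiring care — and what I expect to be the main obstacle — is the term $-Gg$ in the drift: it is not spurious but precisely the It\^o cross term generated when It\^o's product rule is applied to $\Phi^{-1}X_k$ in the derivation of \eqref{hh}, so one must keep the coupled indices $(k',l')$ and $(k'',i)$ consistent through $\Phi$ and $\Phi^{-1}$ (the block-diagonality in $k$ making the $k'$- and $k''$-sums effectively diagonal). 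Beyond this bookkeeping the result is immediate from the already established Proposition \ref{mamia}.
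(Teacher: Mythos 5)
Your proposal is correct and takes essentially the same route as the paper: the paper's entire justification of Proposition \ref{lp} is the sentence preceding it (``From proposition \ref{mamia} and (\ref{SAM}) we get the following''), i.e.\ exactly the specialization of the variation-of-constants formula (\ref{hh}) to the coefficients $F\equiv 0$, $\gamma_{k,l,l'}=A_{l,l'}$, $G_{k,l,i}=\Pi_{l,i}X_{k,i}$, $g_{k,l,i}=\lambda_{l,i}u_{k-1,i}$ that you identify via Remark \ref{RHOUMA}.3. If anything your write-up is more explicit than the paper's (the vanishing of all higher derivatives, the block-diagonality of $\Phi$ in $k$, and the It\^o origin of the $-Gg$ drift correction), and the residual index mismatches between what the substitution literally produces and the paper's displayed formula are the paper's own typographical slips rather than a gap in your argument.
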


\section{The asymptotic character of the expansion}
In this section we shall prove the asymptotic character of the
expansion of the solution $u_{\varepsilon}$ of (\ref{eqn:1}) in
powers of $\varepsilon$, under the hypothesis of sect. 3. By so doing we provide more details and precision to a method sketched (for $d=1.$)\\
Let $u_j,\,j=1,...,k$ be the coefficients in a (first heuristic )
asymptotic expansion of $u_{\varepsilon}(t)$ in powers of
$u_{\varepsilon}$, the equations of which we discussed in sect. 4.\\
Let us study
\begin{equation}
R_k(t,
\varepsilon):=\,\varepsilon^{-(k+1)}[u_{\varepsilon}(t)-\ds_{j=0}^k\,\varepsilon^j\,u_j(t)],\,\,t\geq\,0,\,\varepsilon\,\in\,[0,\,\varepsilon_0].
\end{equation}
We have, using that $u_{\varepsilon}(t)$ solves (\ref{eqn:1}):
\begin{equation}
R_k(t,
\varepsilon)=\,\varepsilon^{-(k+1)}[\dint_0^t\,\beta(u_{\varepsilon}(s))\,ds+\dint_0^t\,\sigma_{\varepsilon}(u_{\varepsilon}(s))\,ds
-\ds_{j=0}^k\,\varepsilon^j\,u_j(t)],\,\,t\geq\,0,\,\varepsilon\,\in\,[0,\,\varepsilon_0].
\end{equation}
Let, for $\underline{y}=(y,y_0,...,y_k)\,\in\,\R^{(k+1)d}:$
\begin{equation}
A^{\beta}_{k+1}(\underline{y}):=\,\varepsilon^{-(k+1)}\Big[\beta(\ds_{j=0}^k\,\varepsilon^j\,y_j\,+\,\varepsilon^{k+1}\,y)-\ds_{j=0}^k\,\varepsilon^j\,\beta_j(y_0,...,y_j)],
\end{equation}
where $\beta_j(y_0,...,y_j)$ is the coefficient of the $j-$th power
in $\varepsilon$ of
$\beta(\ds_{l=0}^{M_\beta}\,\varepsilon^l\,y_l),\,M_\beta\,\geq\,k.$\\
Define correspondingly
\begin{equation}
A^{\sigma}_{k+1}(\underline{y}):=\,\varepsilon^{-(k+1)}\Big[\sigma_{\varepsilon}(\ds_{j=0}^k\,\varepsilon^j\,y_j\,+\,\varepsilon^{k+1}\,y)-\ds_{j=0}^k\,\varepsilon^j\,\sigma_j(y_0,...,y_j)],
\end{equation}
with $\sigma_j(y_0,...,y_j)$ the coefficient of the $j-$th power in
$\varepsilon$ of
$\sigma_{\varepsilon}(\ds_{l=0}^{M_{\sigma}}\,\varepsilon^l\,y_l),\,M_{\sigma}\,\geq\,k.$\\
By Taylor theorem applied to the expansion of
$A^{\beta}_{k+1}(\underline{y})$ as a function of $\underline{y}$
around $\underline{y}_{0}=(y_0,0,...,0)$, we have
\begin{equation}
\mid A^{\beta}_{k+1}(\underline{y})\mid\,\leq\,\mid
\beta^{(k+1)}(y_{\varepsilon}^{\star})\mid
\end{equation}
for some $y_{\varepsilon}^{\star}\,\in\,\R^d, $ depending on
$\varepsilon$ and $\underline{y}_0$ but such that
\begin{equation}
\dsup_{\varepsilon, \underline{y}}\,\mid
\beta^{(k+1)}(y_{\varepsilon}^{\star})\mid\,\leq\,C_{k+1},
\end{equation}
with $C_{k+1}$ a constant, provided
$\beta^{(k+1)}(y),\,y\,\in\,\R^d,$ is uniformly bounded, which we
assume. Hence
\begin{equation}
\dsup_{ \underline{y}}\,\mid
A^{\beta}_{k+1}(\underline{y})\mid\,\leq\,C_{k+1}.
\end{equation}

 Similarly, assuming $\sigma_{\varepsilon}^{(k+1)}(y)$ is
uniformly bounded with respect to $\varepsilon$ and $y$ in matrix
norm, we have
\begin{equation}
\dsup_{\varepsilon, \underline{y}}\,\parallel
A^{\sigma}_{k+1}(\underline{y})\parallel\,\leq\,\tilde{C}_{k+1},
\end{equation}
for some constant $\tilde{C}_{k+1}$.\\ Substituting
$y_j=u_j(s),\,j=0,...,k,\,y=R_k(t,\varepsilon)$ into
$A^{\beta}_{k+1}(\underline{y})$ resp.
$A^{\sigma}_{k+1}(\underline{y})$ we have that the $\sup$\,over
$\varepsilon$ and $\underline{y}$ of
$A^{\beta}_{k+1}(\underline{y})$ resp.
$A^{\sigma}_{k+1}(\underline{y})$ is bounded by $C_{k+1}$ resp.
$\tilde{C}_{k+1}.$ This implies:
\begin{equation}
\dsup_{s\,\in\,[0,t],\,\varepsilon}\,\mid
A^{\beta}_{k+1}(u_0(s),...,u_k(s),R_k(s,\varepsilon)
)\mid\,\leq\,C_{k+1}
\end{equation}
and
\begin{equation}
\dsup_{s\,\in\,[0,t],\,\varepsilon}\,\parallel
A^{\sigma}_{k+1}(u_0(s),...,u_k(s),R_k(s,\varepsilon)
)\parallel\,\leq\,\tilde{C}_{k+1}.
\end{equation}
From this we get that
\begin{equation}
\dsup_{s\,\in\,[0,t],\,\varepsilon}\,\mid
A^{\beta}_{k+1}(u_0(s),...,u_k(s),R_k(s,\varepsilon)
)\mid^p\,\leq\,C^p_{k+1}
\end{equation}
and correspondingly for $A^{\sigma}_{k+1},$ for any $1\,\leq
p\,\leq\,\infty.$ Thus
\begin{equation}
\mathbb{E}\big(\dsup_{s\,\in\,[0,t],\,\varepsilon}\,\mid
A^{\beta}_{k+1}(u_0(s),...,u_k(s),R_k(s,\varepsilon) )\mid^p
\Big)\,\leq\,C^p_{k+1}
\end{equation}
and correspondingly for $A^{\sigma}_{k+1}.$ From this it follows
that $\dsup_{s\,\in\,[0,t],\,\varepsilon}\,\mid
A^{\beta}_{k+1}(u_0(s),...,u_k(s),R_k(s,\varepsilon) )\mid^p$
converges as $\varepsilon\,\downarrow\,0$ in $L^p(p)$, and
correspondingly for $A^{\sigma}_{k+1}.$ Hence there is a subsequence
$\varepsilon_l\,\downarrow\,0$ as $l\,\longrightarrow\,\infty$, s.t:
\begin{equation}
\dsup_{s\,\in\,[0,t]}\,\mid
A^{\beta}_{k+1}(u_0(s),...,u_k(s),R_k(s,\varepsilon_l) )\mid
\end{equation}
converges stochastically as $l\,\longrightarrow\,\infty$, and
correspondingly for $\beta$ replaced by $\sigma.$\\
Taking a common subsequence $\varepsilon_{l'}$ we have then that
both
\begin{equation}
\dsup_{s\,\in\,[0,t]}\,(
A^{\beta}_{k+1}(u_0(s),...,u_k(s),R_k(s,\varepsilon_l) ))
\end{equation}
and
\begin{equation}
\dsup_{s\,\in\,[0,t]}\,(
A^{\sigma}_{k+1}(u_0(s),...,u_k(s),R_k(s,\varepsilon_l) ))
\end{equation}
converge stochastically for
$\varepsilon_{l'}\,\downarrow\,0,\,\,\,l'\,\longrightarrow\,\infty$,
to limits $\tilde{A}^{\beta}_{k+1}$ resp.
$\tilde{A}^{\sigma}_{k+1}$.\\
But $R_k(t,\varepsilon)$ satisfies (by construction of the
$u_0,...,u_k$ and the definition of $A^{\beta}_{k+1}$) the
stochastic differential equation
\begin{equation}\label{BOOO}
dR_k(t,\varepsilon)=A^{\beta}_{k+1}(u_0(t),...,u_k(t),R_k(s,\varepsilon))\,dt+\,A^{\sigma}_{k+1}(u_0(t),...,u_k(t),R_k(t,\varepsilon))\,\eta(ds),\,\,
for\,\,any\,\,\varepsilon\,\,\in\,[0,\varepsilon_0].
\end{equation}
By a well known result for SDE's, see, e.g, \cite{GiSk}, we have
that, the solution $R_k(t,\varepsilon_{l'})$ of (\ref{BOOO}),  has a
finite limit, call it $R_k(t,\,0)$, in the
P-stoch$\dsup_{s\,\in\,[0,t]}\,$ sense, as
$\varepsilon_{l'}\,\downarrow\,0, l'\,\longrightarrow\,\infty$.\\
But by the definition of $R_k(t,\varepsilon)$ this means that
\begin{equation}
P-stoch\dsup_{s\,\in\,[0,t]}\,
\dlim_{l'\,\longrightarrow\,\infty}\,\varepsilon^{k+1}_{l'}\,R_k(t,\,\varepsilon_{l'})=0,\,
\end{equation}
i.e. the expansion
\begin{equation}
u_{\varepsilon_{l'}}(t)=\ds_{j=0}^k\,\varepsilon_{l'}^j\,u_j(t)+\,\varepsilon_{l'}^{k+1}\,R_k(t,\,\varepsilon_{l'}),
\end{equation}
is asymptotic in the P-stoch$\dsup_{s\,\in\,[0,t]}\,$ sense. Hence
we have proven the following

\begin{theo}
Let $u_{\varepsilon}(t),\,
\varepsilon\,[0,\varepsilon_0],\,\varepsilon_0\,>0,$ be the solution
of (\ref{eqn:1}). Assume $\beta$ and $\sigma_{\varepsilon}$ are
${\cal C}^{k+1}$ in the space variables and $\sigma_{\varepsilon}$
is ${\cal C}^M$ in $\varepsilon,$ for some $M\,\geq\,k+1$.\\
Assume also that $\beta$ and $\sigma_{\varepsilon}$ are such that
the $k+1-$derivatives of $\beta$ and $\sigma_{\varepsilon}$ are
uniformly bounded in the $\mid\,.\,\mid$ resp.
$\parallel\,.\,\parallel-$norm. Then there is a sequence
$\varepsilon\,\downarrow\,0,$ such that
\begin{equation}
u_{\varepsilon}(t)=\ds_{j=0}^k\,\varepsilon^j\,u_j(t)+\,\varepsilon^{k+1}\,R_k(t,\,\varepsilon),
\end{equation}
with
\begin{equation}
P-stoch\dsup_{s\,\in\,[0,t]}\,\mid\,R_k(t,\,\varepsilon)\,\mid\,\leq\,\varepsilon^{k+1}\,C_{k+1},
\end{equation}
where $C_{k+1}$ is a constant. The $u_j$ satisfy the equations
described in sect.3 and further in sect.4
\end{theo}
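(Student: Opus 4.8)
The plan is to reproduce and complete the construction already laid out before the statement: introduce the scaled remainder $R_k(t,\varepsilon):=\varepsilon^{-(k+1)}[u_\varepsilon(t)-\ds_{j=0}^k\varepsilon^j u_j(t)]$, determine the stochastic differential equation it solves, bound the coefficients of that equation uniformly in $\varepsilon$ by Taylor's theorem, and then pass to the limit $\varepsilon\downarrow 0$ through a compactness argument. First I would substitute the expansion (\ref{eqn:2}) for $u_\varepsilon$ into the SDE (\ref{eqn:1}) and subtract the equations (\ref{zero})--(\ref{hiddi}) satisfied by the coefficients $u_j$ from Proposition \ref{haffa}. By the very way the $u_j$ were matched order by order, the contributions of orders $0,1,\ldots,k$ cancel exactly, and what is left is precisely equation (\ref{BOOO}) for $R_k$, whose drift is $A^{\beta}_{k+1}(u_0(t),\ldots,u_k(t),R_k(t,\varepsilon))$ and whose diffusion coefficient is $A^{\sigma}_{k+1}(u_0(t),\ldots,u_k(t),R_k(t,\varepsilon))$, the two normalized Taylor remainders introduced above.

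Next I would bound these remainders using Taylor's formula (Lemma \ref{l1}), applied with $x_0=u_0$ and $x$ the full expansion argument. The decisive point is that the prefactor $\varepsilon^{-(k+1)}$ is absorbed by the $(x-x_0)$-weight of order $k+1$ appearing in the remainder term of (\ref{haj}), leaving a quantity controlled by the $(k+1)$-th derivative of $\beta$ (resp.\ $\sigma_\varepsilon$). Since by hypothesis these derivatives are uniformly bounded in the $\mid\,\cdot\,\mid$- resp.\ $\parallel\,\cdot\,\parallel$-norm, I obtain deterministic constants $C_{k+1}$ and $\tilde C_{k+1}$ with $\dsup_{s\in[0,t],\,\varepsilon}\mid A^{\beta}_{k+1}\mid\,\le C_{k+1}$ and $\dsup_{s\in[0,t],\,\varepsilon}\parallel A^{\sigma}_{k+1}\parallel\,\le\tilde C_{k+1}$. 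Because these bounds are pathwise and uniform, they immediately upgrade to $L^p$ bounds for every $1\le p\le\infty$, so that $\E\big(\dsup_{s\in[0,t],\,\varepsilon}\mid A^{\beta}_{k+1}\mid^p\big)\le C_{k+1}^p$, and analogously for $A^{\sigma}_{k+1}$.

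The final step is the limit $\varepsilon\downarrow 0$. From the uniform $L^p$ bounds I would extract a subsequence $\varepsilon_{l'}\downarrow 0$ along which both $\dsup_{s\in[0,t]}A^{\beta}_{k+1}$ and $\dsup_{s\in[0,t]}A^{\sigma}_{k+1}$ converge stochastically to limits $\tilde A^{\beta}_{k+1}$, $\tilde A^{\sigma}_{k+1}$. Inserting this convergence into (\ref{BOOO}) and invoking a standard continuous-dependence (stability) theorem for SDE's, as in \cite{GiSk}, I would conclude that $R_k(t,\varepsilon_{l'})$ converges in the $P\text{-stoch}\dsup_{s\in[0,t]}$ sense to a finite limit $R_k(t,0)$. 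Finiteness of this limit together with the uniform bound already obtained yields $P\text{-stoch}\dsup_{s\in[0,t]}\mid R_k(t,\varepsilon)\mid\,\le C_{k+1}$, which, by the definition of $R_k$, is exactly the asserted asymptotic estimate $\mid u_\varepsilon(t)-\ds_{j=0}^k\varepsilon^j u_j(t)\mid\,\le C_{k+1}\,\varepsilon^{k+1}$; the $u_j$ are by construction the solutions of the equations of sections~3 and~4.

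I expect the principal obstacle to be this passage to the limit in (\ref{BOOO}) rather than the (essentially routine) Taylor estimate. Because the noise is multiplicative, the diffusion coefficient $A^{\sigma}_{k+1}$ depends on the unknown $R_k$ itself, so one cannot simply estimate a fixed driving term but must appeal to a genuine stability result for SDE's whose coefficients converge; it is exactly the uniform boundedness of the $(k+1)$-th derivatives, combined with the global Lipschitz and growth conditions of section~3, that makes such a result applicable. A secondary subtlety is that the argument delivers convergence only along a subsequence --- reflected in the statement's phrase \emph{there is a sequence} $\varepsilon\downarrow 0$ --- since the control is obtained through $L^p$-relative compactness and stochastic-$\dsup$ convergence rather than through a direct pathwise contraction estimate.
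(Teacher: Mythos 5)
Your proposal follows essentially the same route as the paper's own argument: the scaled remainder $R_k(t,\varepsilon)=\varepsilon^{-(k+1)}[u_\varepsilon(t)-\sum_{j=0}^k\varepsilon^j u_j(t)]$, the normalized Taylor-remainder coefficients $A^{\beta}_{k+1}$, $A^{\sigma}_{k+1}$ bounded uniformly via the hypothesis on the $(k+1)$-th derivatives, the upgrade to $L^p$ bounds, the extraction of a common subsequence along which both coefficients converge stochastically, and the appeal to the stability result of \cite{GiSk} for the SDE (\ref{BOOO}) to obtain the finite limit $R_k(t,0)$ and hence the asymptotic estimate. This matches the paper's proof step for step (including its reliance on subsequence extraction, which is why the conclusion holds only along a sequence $\varepsilon\downarrow 0$), so there is nothing to add.
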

\begin{rem}
In the case of additive noise, with $\beta$ the sum of a linear part
and a polynomially bounded non linear part, we could adopt methods
developed in \cite{Ma}, \cite{AlDiPMa}, \cite{ALEBOU} in the
infinite dimensional case to our case, avoiding thus the
restrictions on growth at infinity of $\beta$ we assumed in sect.3.
See also, e.g,. \cite{Ku}, \cite{NEG}, \cite{WILL}.
\end{rem}

\section{A remark on some applications}
Heuristic asymptotic expansions in  small parameters, to a certain
order and more often without any proof of their asymptotic character
(because of lack of suitable estimates on the remainders) appear
often in the literature. E.g, in neurobiology, stochastic models of
the Fitz Hugh Nagumo type without space dependence have been
discussed extensively, at least with additive Gaussian noise.
Our method can be applied to them. Examples are discussed basically with additive noise, e.g., in \cite{AlDiP}, \cite{Tu2}.\\
Another area where we find examples is mathematical finance. If we
take
$\sigma_0=0,\,\sigma_1(x)=\tilde{\sigma}\,x,\,\tilde{\sigma}\,>0,\,\beta(x)=\,r\,x,\,x\,\in\,\R,$
i.e, we take the model of example2 in remark \ref{RHOUMA} , then
$u_{\varepsilon}$ satisfies the equation of a Black-Scholes model
with volatility parameter $\varepsilon\,\tilde{\sigma}$ and our
expansion is then a small volatility expansion, see also \cite{LUT}.
If we take instead
$\sigma_0(x)=\tilde{\sigma}\,x,\,\sigma_i(x)\,\neq\,0$ for some
$i\,\in\,\N,\,\beta(x)=rx,\,x\,\in\,\R,$ then we have a stochastic
volatility model with leading order given by the Black-Scholes
solution and the expansions
yields corrections around the Black-Scholes model.\\
Similar applications can be given to the multidimensional
Black-Scholes model, see, e.g, \cite{LUT}, \cite{UY} and the
AS-model for interacting assets discussed
in \cite{ALST}, \cite{LUT}.\\
For other applications in this area see also \cite{IP}.\\\\

\noindent{\bf Acknowledgements:} This work was supported by the
Department of Mathematics and Statistics at King Fahd University of
Petroleum and Minerals under the summer project term 123 and hosted
at the University of Bonn, Germany.\\ The authors gratefully
acknowledge this support.\\
We are also grateful to Luca Di Persio and Elisa Mastrogiacomo for
stimulating discussions and collaborations on related problems.

\vskip4cm
\begin{flushleft}
\footnotesize
{\it S. Albeverio}  \\
 Dept. Appl. Mathematics, University of Bonn,\\
HCM, BiBoS, IZKS; KFUPM (Chair Professorship) , \\
\medskip
{\it Boubaker Smii} \\
Dept. Mathematics, King Fahd University of Petroleum and Minerals, \\
Dhahran 31261, Saudi Arabia\\

\medskip
 E-mail: albeverio@uni-bonn.de \\
\hspace{1,1cm} boubaker@kfupm.edu.sa\\

 \end{flushleft}

\end{document}